\theoremstyle{plain}
\newtheorem{thm}{\noindent\bf Theorem}[section]
\newtheorem{cor}{\noindent\bf Corollary}[section]
\newtheorem{lem}{\noindent\bf Lemma}[section]
\newtheorem{defn}{\noindent\bf Definition}[section]
\newtheorem{prop}{\noindent\bf Proposition}[section]
\theoremstyle{remark}
\newtheorem{rmk}{\noindent \bf \textit{Remark}}[section]
\newcommand{\D}{\displaystyle}
\newcommand{\non}{\nonumber}
\newcommand{\Pm}{\mathbb{P}}
\newcommand{\Em}{\mathbb{E}}
\newcounter{num}
\newcommand{\wo}{W^{(\omega)}}
\newcommand{\zo}{Z^{(\omega)}}
\newcommand{\uo}{U^{(\omega)}}
\newcommand{\hoe}{H_{\varepsilon}}
\newcommand{\woe}{W^{(\omega+\omega_{\varepsilon})}}
\newcommand{\zoe}{Z^{(\omega+\omega_{\varepsilon})}}
\newcommand{\wqe}{W^{(q+\omega_{\varepsilon})}}
\newcommand{\hoap}{\mathsf{h}^{(\omega;p)}_{(a)}}
\newcommand{\woap}{\mathsf{W}^{(\omega;p)}_{(a)}}
\newcommand{\zoap}{\mathsf{Z}^{(\omega;p)}_{(a)}}
\newcommand{\wqap}{\mathsf{W}^{(q;p)}_{(a)}}
\newcommand{\zqap}{\mathsf{Z}^{(q;p)}_{(a)}}
\newcommand{\wnqp}{\mathsf{W}^{(q;p_{1},\cdots,p_{n})}_{(a_{1},\cdots,a_{n})}}
\newcommand{\znqp}{\mathsf{Z}^{(q;p_{1},\cdots,p_{n})}_{(a_{1},\cdots,a_{n})}}
\newcommand{\ba}{\boldsymbol{\alpha}}
\newcommand{\bb}{\boldsymbol{\beta}}
\newcommand{\bs}{\boldsymbol{\Sigma}}
\newcommand{\bl}{\boldsymbol{\Lambda}}
\newcommand{\br}{\boldsymbol{\gamma}}
\numberwithin{equation}{section}
\begin{document}

\title{Local times for spectrally negative L\'evy processes\footnote{BL's research is supported by
NNSF (11601243) and the Fundamental Research Funds for the Central Universities.
BL's and XZ's research are supported by NSERC (RGPIN-2016-06704). XZ's research are supported by NNSF (11731012).}}

\author{Bo Li 
\footnote{School of Mathematics and LPMC, Nankai University. 
\href{mailto:libo@nankai.edu.cn}{libo@nankai.edu.cn}}
\and Xiaowen Zhou \footnote{Department of Mathematics and Statistics, Concordia
University.
\href{mailto:xiaowen.zhou@concordia.ca}{xiaowen.zhou@concordia.ca}}}

\maketitle
\begin{abstract}
For
spectrally negative L\'evy processes, adapting an approach from \cite{BoLi:sub1} we identify joint
 Laplace transforms involving local times evaluated at either the first passage times, or independent exponential times, or inverse local times. The Laplace transforms are expressed in terms of the associated scale functions.
 Connections are made  with the permanental process and the Markovian loop soup measure.
\end{abstract}

\textbf{Keywords}: spectrally negative L\'evy process, local time, inverse local time, weighted occupation time,
permanental process, Markovian loop soup measure. 


\section{Introduction}


Occupation times and local times have been well studied for diffusions. But for spectrally negative L\'evy processes (SNLPs in short) the systematic study of occupation times only started 
a few years ago.
 During the last several years there have been
a few papers on Laplace transforms of occupation times for SNLPs, which stem from their applications in risk theory and finance and are also of theoretical interest; see for example \cite{Gerber2012:omega, Landriault2011:occupationtime:levy, Loeffen2014:occupationtime:levy,Zhou2014:occupationtime:levy,Zhou2015:occupationtime:levy,BoLi:sub1}. Among them using a perturbation approach
\cite{Landriault2011:occupationtime:levy} studied the occupation times of semi-infinite intervals for spectrally negative L\'{e}vy processes,
for the occupation times spent in a finite interval, using a strong approximation approach
\cite{Loeffen2014:occupationtime:levy} identified Laplace transforms until first passage times, and
\cite{Zhou2014:occupationtime:levy,Zhou2015:occupationtime:levy} investigated the joint Laplace transforms on occupation times with a different Poisson approach.
The associated resolvent measure was found in
\cite{Guerin2014:occupationmeasure:levy}.

Given the previous results in \cite{Loeffen2014:occupationtime:levy} on
 Laplace transforms of occupation times
spent by a SNLP over finite intervals,
for example, for $0<a<b<c$ and $x\in(0,c)$
\begin{equation}\label{ratio}
\Em_{x}\Big(\exp\big(-p \tau_{c}^{+}- q \int_{0}^{\tau_{c}^{+}}
\mathbf{1}_{\{X_{t}\in(a,b)\}}\,dt\big); \tau_{c}^{+}<\tau_{0}^{-}\Big)=
\frac{W^{(p,q+p,p)}_{(a,b)}(x)}{W^{(p,q+p,p)}_{(a.b)}(c)},
\end{equation}
where $W^{(p,q+p,p)}_{(a,b)}$ is an auxiliary function to be introduced in Section \ref{sec:5.1}
and expressed in terms of the classical scale functions for the SNLP.
The next natural question is to find Laplace transforms of occupation densities, or local times, which can be obtained by taking appropriate limits on
 the associated occupation times.
But  it is not clear to us how to identify the limit of the ratio in \eqref{ratio}  which involves the  asymtotic behavior of $W^{(p,p+\frac{q}{2\varepsilon},p)}_{(a-\varepsilon,a+\varepsilon)}(x)$ as $\varepsilon\to0+$.

On the other hand, by generalizing the Poisson approach, \cite{BoLi:sub1} recently further obtained Laplace transforms of weighted occupation times for SNLPs, which are expressed in terms of the unique solutions to integral equations that involve the scale functions and the weight functions. The integral equations allow to rigorously identify limits of the solutions as the weight function converges to a delta function, which gets around the above-mentioned difficulty on the scale functions and produces the Laplace transforms of the occupation densities.

Applying the results in \cite{BoLi:sub1},  for a SNLP we implement the above mentioned alternative  approach to find joint Laplace transforms on the local time process either at some stopping times or at independent exponential times for a SNLP, which we summarize in the following.

Let $X=(X_{t})_{t\geq 0}$ be a one-dimensional spectrally negative
L\'{e}vy process, i.e. a L\'evy process with no positive
jumps. We are first interested in the joint Laplace
transforms of
\[
\big(\tau_{b}^{+}, l(a,\tau_{b}^{+})\big),
\quad
\big(\tau_{c}^{-}, l(a,\tau_{c}^{-})\big)
\quad\text{and}\quad
\big(X(e_{q}), l(a,e_{q})\big),
\]
where $e_{q}$ is an exponential random variable with parameter $q$
independent of $X$, $l(a,\cdot)$ is the local time of $X$ at level
$a$, and
\[
\tau_{x}^{+}:=\inf\{t\geq 0, X_{t}>x\}
\quad\text{and}\quad
\tau_{x}^{-}:=\inf\{t\geq 0, X_{t}<x\}
\]
 with the convention $\inf\emptyset=\infty $ are the first passage times of level $x$.
Joint Laplace transforms for local times at different levels are also obtained. All our results are expressed in terms of scale functions for {the} process $X$.

The local times for Markov processes can also be studied via permanental processes; see \cite{EISENBAUM20091401,fitzsimmons2014} and references therein.
As applications of our results, we can reprove
a known connection between the local time process and {the} permanental process for {SNLPs}. An expression in terms of the scale function is also found for the joint Laplace transform of the local time process under the loop soup measure.



The rest of the paper is arranged as follows.
In Section \ref{sec:2}, we quickly review the scale functions and some well known fluctuation identities of SNLPs.
Our main results on Laplace transforms of local times are presented in Section \ref{sec:3}
where we consider SNLPs with sample paths of unbound variation. The Laplace transforms of joint occupation times together with their connections with the permanental processes and the loop soup measure are further studied in this section. Section \ref{sec:4} contains several examples.
 Proofs  of the main results together with several preliminary results on the $\omega$-scale functions and the associated integral equations  are deferred to Section \ref{sec:proof}.

\section{Preliminaries}\label{sec:2}
We firstly briefly review the theory of spectrally negative L\'{e}vy processes, the associated scale functions, some fluctuation identities and the local times.
For further details, we refer the readers to  \cite{Bertoin96:book} and \cite{Kyprianou2014:book:levy}.

Let $X=(X_{t})_{t\geq 0}$ be a spectrally negative L\'{e}vy process, i.e.
 a one-dimensional stochastic process with stationary and independent increments and with no positive jumps. We exclude the case that $X$ is the negative of a subordinator. Its Laplace transform exists and is specified by
\[
\Em\big(\exp(\theta X_{t})\big)=\exp(\psi(\theta) t), \quad\text{for} \,\, \theta\geq 0.
\]
The function $\psi(\theta)$, known as the Laplace exponent of $X$, is continuous and strictly convex on $\mathbb{R}^{+}$ and given by the L\'{e}vy-Khintchine formula:
\[
 \psi(\theta)=\frac{\sigma^{2}}{2}\,\theta^{2}+ \gamma \theta
 +\int_{(-\infty, 0)}(e^{\theta x}-1-\theta
 x\, \boldsymbol{1}_{\{x\geq -1\}})\,\Pi (d x),
\]
where $\gamma\in\mathbb{R}$, $\sigma\geq 0$ and the L\'{e}vy measure $\Pi$ is a $\sigma$-finite measure on $(-\infty,0)$ such that $\int_{(-\infty, 0)} (1\wedge x^{2})\,\Pi (dx)<\infty$.

For $q\geq 0$, the $q$-scale function $W^{(q)}$
is a  continuous and increasing function on $[0,\infty)$, $W^{(q)}(x)=0$ for $x<0$
 and
\begin{equation}
\int_{0}^{\infty} e^{-sy} W^{(q)}(y)\,dy=\frac{1}{\psi(s)-q}, \quad\text{for $s>\Phi(q)$}
\end{equation}
where $\Phi(q):=\sup\{s\geq 0, \psi(s)=q\}$ denotes the right inverse of $\psi$. With the scale function $W^{(q)}$ defined, we can define another scale function by
\begin{equation}\label{Z-W}
Z^{(q)}(x):=1+ q \int_{0}^{x}W^{(q)}(y)\,dy, \quad\text{for $x\in\mathbb{R}$}.
\end{equation}
We write $W(x)=W^{(0)}(x)$ and $Z(x)=Z^{(0)}(x)$ whenever $q=0$.
It is known that as $x\to\infty$,
\begin{equation}\label{eqn:limit:wz}
\frac{W^{(q)}(x-a)}{W^{(q)}(x)}\to e^{-\Phi(q)a},
\quad
e^{-\Phi(q)x}W^{(q)}(x)\to\Phi'(q)
\quad \text{and}\quad
\frac{Z^{(q)}(x)}{W^{(q)}(x)}\to \frac{q}{\Phi(q)},
\end{equation}
 where for $q=0$ and $\Phi(0)=0$, we understand that ${q}/{\Phi(q)}={1}/{\Phi'(0)}=\psi'(0)\in[0,\infty)$ and further, $\Phi'(0)=\infty$ for $\psi'(0)=0$.
We refer to \cite{Kuznetsov2012:scalefunction, Hubalek2011:scalefunction:examples} for a more detailed discussions and examples of scale functions.

For simplicity, our main results focus on the case of $W(0)=0$ in which the SNLP $X$ has sample paths of unbounded variation,
and which is equivalent to $\sigma>0$ or $\D \int_{-1}^{0}|x| \Pi(dx)=\infty$.

Throughout the paper, the probability law of $X$ for $X_{0}=x$ is denoted by $\Pm_{x}$ and the corresponding expectation by $\Em_{x}$.
Write $\Pm$ and $\Em$ whenever $x=0$.
For $x\in[0,b]$, the solutions to the two-sided exit problems for $X$ are given by
\begin{equation}\label{one-sided}
\Em_{x}\big(e^{-q \tau_{b}^{+}}; \tau_{b}^{+}<\tau_{0}^{-}\big)
= \frac{W^{(q)}(x)}{W^{(q)}(b)}
\end{equation}
and
\begin{align}
\Em_{x}\big(e^{-q \tau_{0}^{-}}; \tau_{0}^{-}<\tau_{b}^{+}\big)
=&\ Z^{(q)}(x)- \frac{W^{(q)}(x)}{W^{(q)}(b)}Z^{(q)}(b).
\end{align}
For $q\geq0$, the $q$-resolvent of $X$ killed when first exiting {the} interval $[0,b]$ is expressed as
\begin{align}
U^{(q)}(x,dy):=&\ \int_{0}^{\infty} e^{-q t}
\Pm_{x}\big(X_{t}\in\,dy, t<\tau_{b}^{+}\wedge\tau_{0}^{-}\big)\,dt\non\\
=&\ \Big(\frac{W^{(q)}(x)}{W^{(q)}(b)}W^{(q)}(b-y)-W^{(q)}(x-y)\Big)\,dy,
\quad \text{$x,y\in(0,b)$}.
\end{align}

Given a SNLP $X$, its occupation measure is absolutely continuous with respect to Lebesgue measure {$\Pm$-a.s.} with its occupation density, or local time, $l(x,t)$ defined as
\begin{equation}
\label{defn:localtime}
l(x,t):=\limsup_{\varepsilon\to0+}\frac{1}{2\varepsilon} \int_{0}^{t} \boldsymbol{1}_{\{|X_{s}-x|\leq \varepsilon\}}\,ds
\end{equation}
for $x\in\mathbb{R}, t\geq 0$.
If {$W(0)=0$}, the origin is regular for $X$,
the convergence of \eqref{defn:localtime} is in $L^{2}(\Pm)$, which  holds uniformly on any compact interval of time;
see \cite[Chapter V]{Bertoin96:book}.
For $x\in\mathbb{R}$,
$\big(l(x,t)\big)_{t\geq 0}$ defines a
continuous and
increasing process which plays an essential role in defining the process of
excursions of the sample path away from $x$.
Under further conditions, for example \textit{the majorizing measure condition} in \cite[Chapter V]{Bertoin96:book}, there is a version of random field $\big(l(x,t)\big)_{x\in\mathbb{R},t\geq 0}$ that is jointly continuous in $(x,t)$;
see \cite{Bertoin96:book, Pardo2015:exursion} for more details.

 If {$W(0)>0$}, then the origin is irregular for $X$ which has sample paths of bounded variation, and
the local time 
can be defined  as
\[
d\times l(x,t)=\frac{1}{2} \big(\boldsymbol{1}_{\{X_{0}=x\}}+ \boldsymbol{1}_{\{X_{t}=x\}}\big)
+ \#\{s\in(0,t): X_{s}=x\}, \quad  t>0,
\]
where $d= W(0)^{-1}> 0$ is the drift coefficient of process $X$.
In this case, the local time can be evaluated more directly by simply counting the number of hitting times, and we leave it to the interested readers.

\section{Main results}\label{sec:3}
Given the notation in the previous section, we are ready to present our main results.
The proofs of Lemma \ref{lem:hitting} and the Theorems are deferred to Section \ref{sec:proof}.
For the rest of this section, $p,q\geq 0$ and $a,b,c\in\mathbb{R}$ are constants satisfying $c<a<b$.


\subsection{Local time at first passage times}
We first consider  the local times at the fixed point $a$.
Define auxiliary {generalized} scale functions as follows,
\begin{equation}\label{eqn:1}
\left\{
\begin{split}
\wqap(x,y):=&\ W^{(q)}(x-y)+ p W^{(q)}(x-a) W^{(q)}(a-y),\\
\zqap(x,c):=&\ Z^{(q)}(x-c)+ p W^{(q)}(x-a) Z^{(q)}(a-c),
\end{split}\right.
\end{equation}
for $x,y\in\mathbb{R}$.
More general $\omega$-scale functions are  to be introduced later in Section \ref{sec:5.1}. Then we can express the following joint Laplace transforms involving the local time using the {generalized} scale functions.
\begin{thm}
[Fluctuation identities involving local times]
\label{thm:loc:q}
For any $x\in[c,b]$, we have
\begin{align*}
 \Em_{x}\big(e^{-q\tau_{b}^{+}-p l(a,\tau_{b}^{+})}; \tau_{b}^{+}<\tau_{c}^{-}\big)
=&\ \frac{\wqap(x,c)}{\wqap(b,c)}
\end{align*}
and
\begin{align*}
 \Em_{x}\big(e^{-q\tau_{c}^{-}-p l(a,\tau_{c}^{-})}; \tau_{c}^{-}<\tau_{b}^{+}\big)
=&\ \zqap(x,c)- \frac{\wqap(x,c)}{\wqap(b,c)} \zqap(b,c).
\end{align*}
We also have the following resolvent density. For any $y\in(c,b)$,
\begin{align*}
&\ \int_{0}^{\infty} e^{-q t}
\Em_{x}\big(e^{-pl(a,t)}; X(t)\in dy, t<\tau_{b}^{+}\wedge\tau_{c}^{-}\big)\,dt\\
=&\ \Big(\frac{\wqap(x,c)}{\wqap(b,c)} \wqap(b,y)- \wqap(x,y)\Big)\,dy.
\end{align*}
\end{thm}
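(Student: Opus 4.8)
The plan is to realize the local-time penalization as a limit of weighted-occupation penalizations and then to invoke the $\omega$-scale function machinery of \cite{BoLi:sub1}. Set $\omega_{\varepsilon}(z):=\frac{p}{2\varepsilon}\boldsymbol{1}_{\{|z-a|\le\varepsilon\}}$, so that, by \eqref{defn:localtime}, $\int_{0}^{t}\omega_{\varepsilon}(X_{s})\,ds\to p\,l(a,t)$ as $\varepsilon\to0+$. For each fixed $\varepsilon>0$ the total killing rate $q+\omega_{\varepsilon}$ is a bounded function, so the weighted-occupation identities of \cite{BoLi:sub1} apply and give
\begin{align*}
\Em_{x}\big(e^{-q\tau_{b}^{+}-\int_{0}^{\tau_{b}^{+}}\omega_{\varepsilon}(X_{s})\,ds};\,\tau_{b}^{+}<\tau_{c}^{-}\big)
=\frac{\wqe(x,c)}{\wqe(b,c)},
\end{align*}
together with the corresponding downward-exit and resolvent identities, where $\wqe$ and $\zqe$ denote the $\omega$-scale functions attached to the killing rate $q+\omega_{\varepsilon}$. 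The two halves of the argument are then to pass $\varepsilon\to0+$ on the analytic side (the scale functions) and on the probabilistic side (the expectations), and to match the limits.

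For the analytic side, each $\wqe(\cdot,c)$ is characterized as the unique solution of the Volterra equation $\wqe(x,c)=W^{(q)}(x-c)+\int_{c}^{x}W^{(q)}(x-z)\,\omega_{\varepsilon}(z)\,\wqe(z,c)\,dz$; existence, uniqueness, a uniform-in-$\varepsilon$ sup bound on $[c,b]$, and equicontinuity all follow from Picard/Gronwall estimates, using that $W^{(q)}$ is continuous and increasing while $\int\omega_{\varepsilon}=p$ is fixed. As $\varepsilon\to0+$ the measures $\omega_{\varepsilon}(z)\,dz$ converge weakly to $p\,\delta_{a}$, so the integral term collapses and any subsequential limit $\wqap(\cdot,c)$ satisfies $\wqap(x,c)=W^{(q)}(x-c)+p\,W^{(q)}(x-a)\,\wqap(a,c)$. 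Setting $x=a$ and invoking $W^{(q)}(0)=0$ (the unbounded-variation hypothesis) forces $\wqap(a,c)=W^{(q)}(a-c)$, and substituting back recovers exactly the closed form in \eqref{eqn:1}; uniqueness of the limiting equation upgrades subsequential convergence to full convergence. The identical scheme, with the same kernel $W^{(q)}$ but forcing term $Z^{(q)}$, applied to the $\zqe$ equation produces $\zqap$, so all three families of scale functions converge to the expressions in the statement.

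For the probabilistic side, I must interchange limit and expectation. Since $c<a<b$, the exit time $\tau_{b}^{+}\wedge\tau_{c}^{-}$ is finite $\Pm_{x}$-almost surely; the penalizing exponentials are bounded by $1$; and the convergence in \eqref{defn:localtime} holds in $L^{2}(\Pm)$ uniformly on compact time intervals (\cite{Bertoin96:book}). Hence $\int_{0}^{\tau_{b}^{+}}\omega_{\varepsilon}(X_{s})\,ds\to p\,l(a,\tau_{b}^{+})$ in probability on $\{\tau_{b}^{+}<\tau_{c}^{-}\}$, and bounded convergence yields the limit of the left-hand side. Matching this with the limit of $\wqe(x,c)/\wqe(b,c)$ proves the first identity; the downward-exit and resolvent identities follow verbatim from their $\omega$-analogues.

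The step I expect to be the main obstacle is the analytic convergence $\wqe\to\wqap$, because the limiting weight is the singular measure $p\,\delta_{a}$ rather than a bounded function: one must verify that the solution map of the Volterra equation is continuous under weak convergence of the weight measures, uniformly in $x\in[c,b]$. The key enabling fact is that $W^{(q)}(0)=0$ makes the self-interaction contribution at the concentration point $a$ vanish, so that $\frac{p}{2\varepsilon}\int_{a-\varepsilon}^{a+\varepsilon}W^{(q)}(x-z)\,\wqe(z,c)\,dz\to p\,W^{(q)}(x-a)\,\wqap(a,c)$ with no anomalous boundary term. This is precisely the advantage of the integral-equation formulation over attempting the limit directly on the ratio \eqref{ratio}, whose awkward scale-function asymptotics obstruct the computation.
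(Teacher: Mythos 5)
Your proposal is correct and follows the same overall route as the paper: approximate $p\,l(a,\cdot)$ by the occupation times $\int_0^{\cdot}\omega_{\varepsilon}(X_s)\,ds$, invoke the $\omega$-fluctuation identities of \cite{BoLi:sub1} (Proposition \ref{prop:levy:omega} here), pass to the limit $\varepsilon\to0+$ on both the probabilistic and the analytic sides, and identify the limiting scale functions with \eqref{eqn:1}. Where you diverge is in how the convergence $\wqe\to\wqap$, $\zqe\to\zqap$ is established. The paper first proves a kernel-transposition identity (Lemma \ref{W-identity}) and a change-of-weight identity (Lemma \ref{lem:equation:2}), so that $\wqe$ solves a Volterra equation with kernel $W^{(q)}$ and weight $\omega_{\varepsilon}$ alone, and then proves convergence (Lemma \ref{lem:con:omega}, used through Remark \ref{rmk:limit:WZe}) by monotonicity: it bounds $\overline{H}(\varepsilon)=\int\omega_{\varepsilon}(z)\bigl(\wqe(z,c)-W^{(q)}(z-c)\bigr)\,dz$ by a self-referential inequality and solves it, with $W(0)=0$ forcing $\overline{H}(\varepsilon)\to0$. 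You start from the same rewritten equation but run a compactness argument instead (Gronwall bounds, equicontinuity, Arzel\`a--Ascoli, weak convergence of $\omega_{\varepsilon}(z)\,dz$ to $p\,\delta_a$, then uniqueness of the limit equation after pinning down the value at $x=a$). Both arguments pivot on exactly the same two facts---the $W^{(q)}$-kernel Volterra formulation and $W^{(q)}(0)=0$ killing the self-interaction at $a$---so the approaches are essentially equivalent in strength; the paper's explicit estimate avoids subsequence extraction and, being stated for a general $\wo$-kernel, is directly reusable for the multi-level and inverse-local-time results (Theorems \ref{cor:5} and \ref{cor:6}), whereas your compactness argument would have to be rerun in that generality. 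One step you should not assert without justification: your opening characterization of $\wqe$ as the solution of the equation with kernel $W^{(q)}$ is \emph{not} the definition (which, per \eqref{defn:Ww}, uses kernel $W$ and weight $q+\omega_{\varepsilon}$); it is precisely Lemma \ref{lem:equation:2} with $\omega\equiv q$ and $\omega_1=q+\omega_{\varepsilon}$, so it must either be proved (as the paper does via Lemma \ref{W-identity}) or explicitly cited from \cite{BoLi:sub1}.
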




Taking $q=0$ in Theorem \ref{thm:loc:q}, we have for $x\in(c,b)$
\begin{align*}
& \Em_{x}\big(e^{-p l(a,\tau_{b}^{+})}\big|
\tau_{b}^{+}<\tau_{c}^{-}\big)
= \frac{W(b-c)}{W(x-c)}\times\frac{W(x-c)+ p W(x-a)W(a-c)}{W(b-c)+ pW(b-a)W(a-c)}\\
=&\ \frac{ W(x-a) W(b-c)}{W(x-c)W(b-a)}+
\Big(1- \frac{ W(x-a)W(b-c)}{W(x-c)W(b-a)}\Big)
\frac{W(b-c)}{W(b-c)+ pW(b-a)W(a-c)}.
\end{align*}
 Inverting the above Laplace transform gives the following distribution of $l(a,\tau_{b}^{+})$.
\begin{cor}\label{cor:7} Given $x\in(c,b)$, we have for $t>0$,
\begin{equation}\label{cor:7a}
\Pm_{x}\big(l(a,\tau_{b}^{+})=0\big|\tau_{b}^{+}<\tau_{c}^{-}\big)=
\frac{ W(x-a) W(b-c)}{W(x-c)W(b-a)}
\end{equation}
and
\begin{equation}\label{cor:7b}
\Pm_{x}\big(l(a,\tau_{b}^{+})>t \big| l(a,\tau_{b}^{+})>0, \tau_{b}^{+}<\tau_{c}^{-}\big)
= \exp\Big(\frac{-W(b-c)t}{W(b-a)W(a-c)}\Big).
\end{equation}
\end{cor}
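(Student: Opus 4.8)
The plan is to read off the claimed distribution directly from the conditional Laplace transform displayed just above the statement, which is the $q=0$ specialization of Theorem~\ref{thm:loc:q}; no new fluctuation computation is needed. Write $L:=l(a,\tau_{b}^{+})$, abbreviate the conditioning event as $E:=\{\tau_{b}^{+}<\tau_{c}^{-}\}$, and set
\[
A:=\frac{W(x-a)W(b-c)}{W(x-c)W(b-a)},\qquad \lambda:=\frac{W(b-c)}{W(b-a)W(a-c)}.
\]
First I would rewrite the second summand of the displayed formula by dividing numerator and denominator by $W(b-a)W(a-c)$, so that
\[
\frac{W(b-c)}{W(b-c)+pW(b-a)W(a-c)}=\frac{\lambda}{\lambda+p},
\]
which recasts the conditional transform in the clean mixture form $\Em_{x}(e^{-pL}\mid E)=A+(1-A)\frac{\lambda}{\lambda+p}$. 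The point of this step is to recognize the right-hand side as the Laplace transform of a law that is a convex combination of a unit mass at $0$ and an exponential distribution.

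Next I would extract the atom at $0$. Since $e^{-pL}\downarrow\boldsymbol{1}_{\{L=0\}}$ pointwise and monotonically as $p\to\infty$, dominated convergence under $\Pm_{x}(\cdot\mid E)$ gives $\Em_{x}(e^{-pL}\mid E)\to\Pm_{x}(L=0\mid E)$; letting $p\to\infty$ in the mixture form, the exponential term vanishes and the limit equals $A$, which is exactly \eqref{cor:7a}. In particular this confirms $A\in[0,1]$, so the decomposition is a genuine probabilistic mixture and no separate monotonicity estimate on the scale function is required.

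Finally I would treat the continuous part by conditioning further on $\{L>0\}$. Using $\Pm_{x}(L>0\mid E)=1-A$ together with the mixture form,
\[
\Em_{x}\big(e^{-pL}\,\big|\,L>0,E\big)=\frac{\Em_{x}(e^{-pL}\mid E)-A}{1-A}=\frac{\lambda}{\lambda+p},
\]
which is precisely the Laplace transform of an exponential random variable of rate $\lambda$. By uniqueness of Laplace transforms, the law of $L$ conditioned on $\{L>0\}\cap E$ is $\mathrm{Exp}(\lambda)$, whence $\Pm_{x}(L>t\mid L>0,E)=e^{-\lambda t}=\exp\big(-W(b-c)t/(W(b-a)W(a-c))\big)$, giving \eqref{cor:7b}. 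There is no substantial obstacle here: the corollary is a direct inversion of an explicit rational-in-$p$ transform, and the only point meriting care is the bookkeeping that isolates the point mass $A$ from the exponential component, handled cleanly by sending $p\to\infty$ first and then renormalizing on $\{L>0\}$.
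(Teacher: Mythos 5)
Your proposal is correct and matches the paper's own argument: the paper likewise specializes Theorem \ref{thm:loc:q} to $q=0$, rewrites the conditional transform as the mixture $A+(1-A)\frac{\lambda}{\lambda+p}$, and then simply states that inverting this Laplace transform yields the corollary. Your write-up just makes the inversion explicit (sending $p\to\infty$ to isolate the atom, then invoking uniqueness of Laplace transforms for the exponential part), which is a faithful elaboration of the same approach.
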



For $a\in\mathbb{R}$ put
\begin{equation}
\tau^{\{a\}}:=\inf\{t>0, X_{t}=a\}.
\end{equation}
Since $l(a,t)=0$ for $t<\tau^{\{a\}}$,
the left hand side of \eqref{cor:7a} in Corollary \ref{cor:7} equals to
$\Pm_{x}(\tau_{b}^{+}<\tau^{\{a\}}|\tau_{b}^{+}<\tau_{c}^{-})$.
Note that
$l(a,t)=l(a,t-\tau^{\{a\}})\circ\theta_{\tau^{\{a\}}}$ on the event  $\{t>\tau^{\{a\}}\}$,
the valuations at local time $l(a,\cdot)$ under $\Pm_{x}$ can be converted to those under $\Pm_{a}$ by applying the strong Markov property at $\tau^{\{a\}}$,
and where we need the following lemma concerning the hitting time.

\begin{lem}\label{lem:hitting} For any $x, a\in(c,b)$, we have
\begin{equation}
\Em_{x}\big(e^{- q \tau^{\{a\}}}; \tau^{\{a\}}<\tau_{b}^{+}\wedge\tau_{c}^{-}\big)
=\frac{W^{(q)}(x-c)}{W^{(q)}(a-c)}- \frac{W^{(q)}(x-a)W^{(q)}(b-c)}{W^{(q)}(b-a)W^{(q)}(a-c)}.
\end{equation}
\end{lem}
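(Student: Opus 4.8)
The plan is to reduce the hitting-time transform to a ratio of killed resolvent densities, exploiting the regularity of the point $a$ (which holds since $W(0)=0$) together with the additive structure of the local time. Write $\tau:=\tau_{b}^{+}\wedge\tau_{c}^{-}$ for the exit time of $[c,b]$, and let $u^{(q)}(x,y)$ be the resolvent density of Section~\ref{sec:2} translated to the interval $[c,b]$, so that
\[
u^{(q)}(x,a)=\frac{W^{(q)}(x-c)}{W^{(q)}(b-c)}\,W^{(q)}(b-a)-W^{(q)}(x-a),\qquad x\in(c,b).
\]
The first task is to identify this density with an expected discounted local time at $a$, namely
\[
u^{(q)}(x,a)=\Em_{x}\Big(\int_{0}^{\tau}e^{-qt}\,d_{t}l(a,t)\Big).
\]
This follows from the occupation density formula: for Borel $A$ one has $\int_{0}^{\tau}e^{-qt}\mathbf{1}_{\{X_{t}\in A\}}\,dt=\int_{A}\big(\int_{0}^{\tau}e^{-qt}\,d_{t}l(y,t)\big)\,dy$, and taking $\Em_{x}$ and comparing with $U^{(q)}(x,A)=\int_{A}u^{(q)}(x,y)\,dy$ yields the identity for Lebesgue-a.e.\ $y$, which then extends to the single level $y=a$ by continuity in the spatial variable.

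Next I would apply the strong Markov property at $\tau^{\{a\}}$. Since $l(a,\cdot)$ is supported on $[\tau^{\{a\}},\infty)$ and satisfies $l(a,\tau^{\{a\}}+s)=l(a,s)\circ\theta_{\tau^{\{a\}}}$, while $X_{\tau^{\{a\}}}=a\in(c,b)$ forces $\tau=\tau^{\{a\}}+\tau\circ\theta_{\tau^{\{a\}}}$ on $\{\tau^{\{a\}}<\tau\}$, the change of variable $t=\tau^{\{a\}}+s$ gives
\[
\int_{0}^{\tau}e^{-qt}\,d_{t}l(a,t)=e^{-q\tau^{\{a\}}}\mathbf{1}_{\{\tau^{\{a\}}<\tau\}}\Big(\int_{0}^{\tau}e^{-qs}\,d_{s}l(a,s)\Big)\circ\theta_{\tau^{\{a\}}}.
\]
Taking $\Em_{x}$ and conditioning on $\mathcal{F}_{\tau^{\{a\}}}$ factorizes the expectation into
\[
u^{(q)}(x,a)=\Em_{x}\big(e^{-q\tau^{\{a\}}};\,\tau^{\{a\}}<\tau\big)\,u^{(q)}(a,a).
\]
Because $W(0)=0$ we have $W^{(q)}(0)=0$, hence $u^{(q)}(a,a)=W^{(q)}(a-c)W^{(q)}(b-a)/W^{(q)}(b-c)>0$; dividing the explicit expressions and cancelling the $W^{(q)}(b-c)$ and $W^{(q)}(b-a)$ factors produces exactly the claimed right-hand side.

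The main obstacle is the first step: justifying the occupation-density identity at the exact level $y=a$ rather than merely for a.e.\ $y$. This needs a version of the local time field regular enough in the space variable that $y\mapsto\Em_{x}(\int_{0}^{\tau}e^{-qt}\,d_{t}l(y,t))$ is continuous, together with the regularity of $a$ ensured by $W(0)=0$, under which $a$ is hit and $u^{(q)}(a,a)\neq0$. Once the identity holds pointwise at $a$, the remaining strong-Markov factorization and algebra are routine. As a consistency check, for $x\le a$ the term $W^{(q)}(x-a)$ vanishes and the formula collapses to $W^{(q)}(x-c)/W^{(q)}(a-c)$, in agreement with the one-sided exit identity \eqref{one-sided} applied on $[c,a]$, since for a process with no positive jumps $\tau^{\{a\}}$ coincides with the upward passage time $\tau_{a}^{+}$ whenever $X$ starts below $a$.
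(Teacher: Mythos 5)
Your proposal is correct and reaches the formula by a genuinely different route than the paper. The paper's proof never touches local times or resolvents: writing $\Em_{x}\big(e^{-q\tau_{b}^{+}};\tau_{b}^{+}<\tau_{c}^{-}\big)$ as the sum over the events $\{\tau_{b}^{+}<\tau_{a}^{-}\}$ and $\{\tau_{a}^{-}<\tau_{b}^{+}\}$, it invokes the path observation of \cite{Ivanovs2012:occupationdensity:map} that, by absence of positive jumps, $\{\tau_{a}^{-}<\tau_{b}^{+}<\infty\}=\{\tau^{\{a\}}<\tau_{b}^{+}<\infty\}$ a.s., applies the strong Markov property at $\tau^{\{a\}}$, and then solves the resulting linear equation for $\Em_{x}\big(e^{-q\tau^{\{a\}}};\tau^{\{a\}}<\tau_{b}^{+}\wedge\tau_{c}^{-}\big)$ using \eqref{one-sided} — pure fluctuation-identity algebra. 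Your argument instead goes through the classical potential-theoretic identity: the hitting-time transform is the ratio $u^{(q)}(x,a)/u^{(q)}(a,a)$ of killed resolvent densities, obtained by strong Markov factorization of the expected discounted local time. Your strong Markov step and the final algebra are sound, and the one obstacle you flag is real: the identification of $u^{(q)}(x,a)$ with $\Em_{x}\big(\int_{0}^{\tau}e^{-qt}\,d_{t}l(a,t)\big)$ at the exact level $a$, not merely a.e. However, it is closable from the paper's own standing facts, and more directly than via spatial continuity of the local-time field: with $\tau:=\tau_{b}^{+}\wedge\tau_{c}^{-}$, since under $W(0)=0$ the convergence in \eqref{defn:localtime} holds in $L^{2}(\Pm)$ uniformly on compact time intervals (Section \ref{sec:2}), a routine dominated-convergence argument (using that $\tau$ has finite exponential moments) gives
\[
\Em_{x}\Big(\int_{0}^{\tau}e^{-qt}\,d_{t}l(a,t)\Big)
=\lim_{\varepsilon\to0+}\frac{1}{2\varepsilon}\,\Em_{x}\Big(\int_{0}^{\tau}e^{-qt}\mathbf{1}_{\{|X_{t}-a|\leq\varepsilon\}}\,dt\Big)
=\lim_{\varepsilon\to0+}\frac{1}{2\varepsilon}\int_{a-\varepsilon}^{a+\varepsilon}u^{(q)}(x,y)\,dy
=u^{(q)}(x,a),
\]
where the last equality needs only the continuity of $W^{(q)}$ together with $W^{(q)}(0)=0$ — the same assumption you already use so that $u^{(q)}(a,a)$ takes the positive product form. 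As for what each approach buys: the paper's proof is shorter, needs no regularity theory for local times, and stays entirely within scale-function identities; yours exposes the structural reason the formula holds (hitting-time transforms of points are ratios of resolvent densities), generalizes beyond the spectrally negative setting, but hinges on exactly the kind of local-time regularity that the paper's own proof of this lemma manages to avoid.
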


Denoting by $e_{q}$ an exponential variable with parameter $q$ and independent of  $X$, we have
\begin{cor}\label{cor:1}
Given $b>a>c$, we have for $p,q> 0$
\[
\Em_{a}\big(e^{-p l(a,e_{q}\wedge\tau_{b}^{+}\wedge\tau_{c}^{-})}\big)
= \frac{W^{(q)}(b-c)}{W^{(q)}(b-c)+ p W^{(q)}(b-a) W^{(q)}(a-c)},
\]
i.e. $l(a,e_{q}\wedge\tau_{b}^{+}\wedge\tau_{c}^{-})$ has
 an exponential distribution.
\end{cor}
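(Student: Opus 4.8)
The plan is to reduce everything to Theorem~\ref{thm:loc:q} evaluated at the starting point $x=a$, after first conditioning on the independent exponential clock $e_q$. Since $e_q$ is independent of $X$ and $\tau:=\tau_{b}^{+}\wedge\tau_{c}^{-}<\infty$ almost surely for $X_{0}=a\in(c,b)$, I would split according to whether the clock rings before or after the exit time:
\[
\Em_{a}\big(e^{-p l(a,e_{q}\wedge\tau)}\big)
=\Em_{a}\big(e^{-p l(a,e_{q})};e_{q}<\tau\big)
+\Em_{a}\big(e^{-p l(a,\tau)};\tau\le e_{q}\big).
\]
On $\{\tau\le e_{q}\}$ the memoryless property gives $\Pm(e_{q}\ge\tau\mid\mathcal{F}_{\tau})=e^{-q\tau}$, so the second summand equals $\Em_{a}(e^{-q\tau-p l(a,\tau)})$, which I split over $\{\tau_{b}^{+}<\tau_{c}^{-}\}$ and $\{\tau_{c}^{-}<\tau_{b}^{+}\}$ and read off directly from the first two identities of Theorem~\ref{thm:loc:q} with $x=a$. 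For the first summand I would condition on $e_{q}=t$ and invoke the resolvent density (third identity of Theorem~\ref{thm:loc:q}) with $x=a$, integrating over the spatial variable $y\in(c,b)$:
\[
\Em_{a}\big(e^{-p l(a,e_{q})};e_{q}<\tau\big)
=q\int_{c}^{b}\Big(\tfrac{\wqap(a,c)}{\wqap(b,c)}\,\wqap(b,y)-\wqap(a,y)\Big)\,dy.
\]

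Next I would evaluate the generalized scale functions at $x=a$. Because we are in the unbounded-variation regime $W(0)=0$, hence $W^{(q)}(0)=0$, the correction terms at $x=a$ vanish: $\wqap(a,c)=W^{(q)}(a-c)$, $\zqap(a,c)=Z^{(q)}(a-c)$, and $\wqap(a,y)=W^{(q)}(a-y)$. The two spatial integrals are then computed from the defining relation \eqref{Z-W}, namely $\int_{0}^{x}W^{(q)}(u)\,du=(Z^{(q)}(x)-1)/q$, after the substitutions $u=b-y$ and $u=a-y$ (the latter using $W^{(q)}(a-y)=0$ for $y>a$). This turns the first summand into an expression involving only $W^{(q)}(a-c)$, $W^{(q)}(b-a)$, $W^{(q)}(b-c)$ and the $Z^{(q)}$ values at $a-c$ and $b-c$.

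The last step is to add the two summands and simplify. Writing $D:=\wqap(b,c)=W^{(q)}(b-c)+p\,W^{(q)}(b-a)W^{(q)}(a-c)$ for the common denominator, I expect all terms involving $Z^{(q)}(a-c)$ or $Z^{(q)}(b-c)$, together with the bare $W^{(q)}(a-c)/D$ contributions, to cancel between the two summands, leaving
\[
\Em_{a}\big(e^{-p l(a,e_{q}\wedge\tau)}\big)
=1-\frac{p\,W^{(q)}(b-a)W^{(q)}(a-c)}{D}
=\frac{W^{(q)}(b-c)}{W^{(q)}(b-c)+p\,W^{(q)}(b-a)W^{(q)}(a-c)},
\]
which is the claimed formula. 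Finally, since this has the form $\lambda/(\lambda+p)$ with $\lambda=W^{(q)}(b-c)/\big(W^{(q)}(b-a)W^{(q)}(a-c)\big)$, it is exactly the Laplace transform of an $\mathrm{Exp}(\lambda)$ law, which identifies the exponential distribution (consistent with the absence of an atom at $0$, since the origin is regular and $l(a,\cdot)$ is instantaneously positive for $X_{0}=a$).

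The main obstacle I anticipate is bookkeeping rather than conceptual: organizing the several $Z^{(q)}$- and $W^{(q)}$-terms produced by the resolvent integration so that the cancellations against the boundary contributions from the first two identities are transparent. A secondary point requiring care is the justification of the conditioning step — that $\tau<\infty$ a.s.\ and that $l(a,e_{q}\wedge\tau)=l(a,e_{q})$ on $\{e_{q}<\tau\}$ while $l(a,e_{q}\wedge\tau)=l(a,\tau)$ on $\{\tau\le e_{q}\}$ — together with the systematic use of $W^{(q)}(0)=0$ to annihilate the correction terms at the starting level $a$.
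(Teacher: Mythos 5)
Your proposal is correct and follows essentially the same route as the paper's own proof: decompose according to which of $\tau_{b}^{+}$, $\tau_{c}^{-}$, $e_{q}$ occurs first, convert the events involving $e_{q}$ into exponential discounting by independence, read off all three pieces from Theorem~\ref{thm:loc:q} at $x=a$ (where $W^{(q)}(0)=0$ reduces $\wqap$, $\zqap$ to the classical scale functions), integrate the resolvent density in $y$ via \eqref{Z-W}, and sum. The cancellations you anticipate do occur exactly as claimed, so no gap remains.
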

\begin{proof}
It follows from
from Theorem \ref{thm:loc:q} that
\begin{align}
&\ \Em_{a}\big(e^{-pl(a,\tau_{b}^{+})}; \tau_{b}^{+}<\tau_{c}^{-}\wedge e_{q}\big)
=\Em_{a}\big(e^{-q\tau_{b}^{+}-p l(a,\tau_{b}^{+})}; \tau_{b}^{+}<\tau_{c}^{-}\big)\non\\
=&\ \frac{W^{(q)}(a-c)}
{W^{(q)}(b-c)+ p W^{(q)}(b-a) W^{(q)}(a-c)},\label{UB:b+}\\
&\ \Em_{a}\big(e^{-pl(a,\tau_{c}^{-})}; \tau_{c}^{-}<\tau_{b}^{+}\wedge e_{q}\big)
= \Em_{a}\big(e^{-q\tau_{c}^{-}-p l(a,\tau_{c}^{-})}; \tau_{c}^{-}<\tau_{b}^{+}\big)\non\\
=&\ \frac{Z^{(q)}(a-c)W^{(q)}(b-c)- Z^{(q)}(b-c)W^{(q)}(a-c)}
{W^{(q)}(b-c)+ p W^{(q)}(b-a) W^{(q)}(a-c)},\label{UB:c-}
\end{align}
and
\begin{align}
&\ \Em_{a}\big(e^{-p l(a,e_{q})}; X_{e_{q}}\in dy, e_{q}<\tau_{b}^{+}\wedge\tau_{c}^{-}\big)\non\\
=&\ q\Big(\frac{W^{(q)}(b-y)W^{(q)}(a-c)- W^{(q)}(a-y)W^{(q)}(b-c)}
{W^{(q)}(b-c)+ p W^{(q)}(b-a) W^{(q)}(a-c)}\Big)\,dy.\label{UB:joint}
\end{align}
Integrating in $y$ over $[c,b]$ on both sides of \eqref{UB:joint} gives
\begin{align}
&\ \Em_{a}\big(e^{-p l(a,e_{q})}; e_{q}<\tau_{b}^{+}\wedge\tau_{c}^{-}\big)\non\\
=&\ \frac{W^{(q)}(a-c)(Z^{(q)}(b-c)-1)- W^{(q)}(b-c)(Z^{(q)}(a-c)-1)}
{W^{(q)}(b-c)+ p W^{(q)}(b-a) W^{(q)}(a-c)}.\label{UB:e}
\end{align}
Adding up \eqref{UB:b+}, \eqref{UB:c-} and \eqref{UB:e} yields the result.
\end{proof}
Note that \eqref{UB:b+} can also be obtained from \eqref{cor:7b} by a standard change of measure argument.
We also have the following results on joint distribution of $X_{e_{q}}$ and $l(a,e_{q})$, where
the first identity coincides with  the expression of resolvent density for process
$X$ killed at the two-sided exit time,  as one would expect.

\begin{cor}\label{cor:3} Conditioning on the event $\{e_{q}<\tau_{b}^{+}\wedge\tau_{c}^{-}\}$, $X_{e_{q}}$ and $l(a,e_{q})$ are independent and follow the respective distributions
\[
q^{-1}\Pm_{a}\big(X_{e_{q}}\in dy, e_{q}<\tau_{b}^{+}\wedge\tau_{c}^{-}\big)=
\Big(\frac{W^{(q)}(a-c)}{W^{(q)}(b-c)}W^{(q)}(b-y)- W^{(q)}(a-y)\Big)\,dy
\]
and
\[
\Pm_{a}\big(l(a,e_{q})\in dt \big| e_{q}<\tau_{b}^{+}\wedge\tau_{c}^{-}\big)=
\frac{W^{(q)}(b-c)}{W^{(q)}(b-a)W^{(q)}(a-c)} e^{-\frac{W^{(q)}(b-c) t}{W^{(q)}(b-a)W^{(q)}(a-c)}}\,dt.
\]
\end{cor}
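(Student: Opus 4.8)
The plan is to read off both marginal laws and their independence directly from the joint transform \eqref{UB:joint} already derived in the proof of Corollary \ref{cor:1} (itself a specialization of the resolvent density in Theorem \ref{thm:loc:q}). The key structural feature is that the right-hand side of \eqref{UB:joint} separates completely: writing $N(y):=W^{(q)}(b-y)W^{(q)}(a-c)-W^{(q)}(a-y)W^{(q)}(b-c)$ and $D(p):=W^{(q)}(b-c)+pW^{(q)}(b-a)W^{(q)}(a-c)$, the identity reads $\Em_{a}\big(e^{-pl(a,e_{q})};X_{e_{q}}\in dy,e_{q}<\tau_{b}^{+}\wedge\tau_{c}^{-}\big)=q\,N(y)D(p)^{-1}\,dy$, in which all $y$-dependence sits in the numerator and all $p$-dependence in the denominator. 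This product structure is precisely the analytic signature of independence, so once the two conditional marginals are identified the factorization is automatic.

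First I would recover the conditional law of $X_{e_{q}}$ by setting $p=0$ in \eqref{UB:joint}. Since $D(0)=W^{(q)}(b-c)$, this yields
\[
q^{-1}\Pm_{a}\big(X_{e_{q}}\in dy,e_{q}<\tau_{b}^{+}\wedge\tau_{c}^{-}\big)=\frac{N(y)}{W^{(q)}(b-c)}\,dy=\Big(\frac{W^{(q)}(a-c)}{W^{(q)}(b-c)}W^{(q)}(b-y)-W^{(q)}(a-y)\Big)\,dy,
\]
which is the first claimed density and, as remarked before the statement, coincides with the killed resolvent $U^{(q)}(a,dy)$ obtained by translating the interval $[0,b]$ to $[c,b]$.

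Next I would extract the conditional law of $l(a,e_{q})$ from \eqref{UB:e}. Dividing \eqref{UB:e} by its own value at $p=0$ cancels the common numerator $W^{(q)}(a-c)(Z^{(q)}(b-c)-1)-W^{(q)}(b-c)(Z^{(q)}(a-c)-1)$ and leaves the conditional Laplace transform $W^{(q)}(b-c)D(p)^{-1}$. Setting $\lambda:=W^{(q)}(b-c)\big/\big(W^{(q)}(b-a)W^{(q)}(a-c)\big)$, this equals $\lambda/(\lambda+p)$, the transform of the exponential density $\lambda e^{-\lambda t}$, giving the second claimed distribution.

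For independence I would check that the full joint conditional transform is the product of the two conditional marginals just found. Writing $P_{0}:=\Pm_{a}(e_{q}<\tau_{b}^{+}\wedge\tau_{c}^{-})$, dividing \eqref{UB:joint} by $P_{0}$ gives $qN(y)\big(P_{0}D(p)\big)^{-1}\,dy$, whereas multiplying the conditional density of $X_{e_{q}}$ by the conditional transform of $l(a,e_{q})$ gives $\big(qN(y)/(W^{(q)}(b-c)P_{0})\big)\cdot\big(W^{(q)}(b-c)/D(p)\big)\,dy$, and the two agree. Since the Laplace transform in the local-time variable together with the measure in $y$ determines the joint law, this product form establishes independence and the stated marginals simultaneously. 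I do not expect a genuine obstacle here: the argument is bookkeeping around identities already in hand, and the only point needing slight care is to normalize consistently by $P_{0}$ so that both factors are honest probability measures.
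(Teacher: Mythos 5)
Your proposal is correct and follows essentially the same route as the paper: both arguments rest on the separated form of \eqref{UB:joint}, where all $y$-dependence sits in the numerator and all $p$-dependence in the denominator $D(p)$, whose reciprocal is (up to normalization) the Laplace transform of an exponential density. The paper simply inverts the transform in \eqref{UB:joint} to write the joint law as an explicit product measure and then conditions, while you phrase the same observation as a factorization of the conditional transform plus uniqueness of Laplace transforms; the difference is bookkeeping, not substance.
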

\begin{proof}
Inverting the Laplace transform in \eqref{UB:joint} we have
\begin{align*}
&\ \Pm_{a}\big(l(a,e_{q})\in dt, X_{e_{q}}\in dy, e_{q}<\tau_{b}^{+}\wedge\tau_{c}^{-}\big)\\
=&\ q\Big(\frac{W^{(q)}(a-c)}{W^{(q)}(b-c)}W^{(q)}(b-y)- W^{(q)}(a-y)\Big)\,dy\\
& \quad\times \Big(\frac{W^{(q)}(b-c)}{W^{(q)}(b-a)W^{(q)}(a-c)} e^{-\frac{W^{(q)}(b-c) t}{W^{(q)}(b-a)W^{(q)}(a-c)}}\Big)\,dt.
\end{align*}
Conditioning on the event $\{e_{q}<\tau_{b}^{+}\wedge\tau_{c}^{-}\}$ finishes the proof.
\end{proof}

Letting $c$ and $b$ tend to infinity, respectively, in \eqref{UB:b+}, \eqref{UB:c-} and \eqref{UB:joint},
from \eqref{eqn:limit:wz} we have the next result.
\begin{cor}\label{cor:2}
Given $b>a>c$, we have for $p,q> 0$,
\begin{align*}
\Em_{a}\big(e^{-q\tau_{b}^{+}-p l(a,\tau_{b}^{+})}; \tau_{b}^{+}<\infty\big)
=&\ \frac{1}{e^{\Phi(q)(b-a)}+ p W^{(q)}(b-a)},\\
\Em_{a}\big(e^{-q \tau_{c}^{-}-p l(a,\tau_{c}^{-})}; \tau_{c}^{-}<\infty\big)
=&\ \frac{Z^{(q)}(a-c) - \frac{q}{\Phi(q)} W^{(q)}(a-c)}{1+ p e^{\Phi(q)(c-a)}W^{(q)}(a-c)}
\end{align*}
and
\begin{align*}
\Em_{a}\big(e^{-p l(a,e_{q})}\big)
=&\ \frac{1}{1+p\Phi'(q)}.
\end{align*}
\end{cor}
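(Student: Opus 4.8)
The plan is to obtain Corollary \ref{cor:2} as a limiting case of the identities \eqref{UB:b+}, \eqref{UB:c-} and \eqref{UB:joint} already established in the proof of Corollary \ref{cor:1}, by sending one of the barriers to infinity and invoking the asymptotic relations \eqref{eqn:limit:wz}. Each of the three assertions corresponds to a different limit, so I would treat them in turn.

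First, for the identity involving $\tau_{b}^{+}$, I would start from \eqref{UB:b+} and let $c\to-\infty$. On the event $\{\tau_{b}^{+}<\tau_{c}^{-}\}$ the lower barrier $c$ recedes to $-\infty$, so $\tau_{c}^{-}\to\infty$ a.s.\ and $\{\tau_{b}^{+}<\tau_{c}^{-}\}\uparrow\{\tau_{b}^{+}<\infty\}$; by monotone convergence the left side of \eqref{UB:b+} converges to $\Em_{a}\big(e^{-q\tau_{b}^{+}-p l(a,\tau_{b}^{+})}; \tau_{b}^{+}<\infty\big)$. For the right side I would divide numerator and denominator by $W^{(q)}(a-c)$ and use the first limit in \eqref{eqn:limit:wz} with the roles $x\leadsto a-c$, $(b-c)=(b-a)+(a-c)$, giving $W^{(q)}(b-c)/W^{(q)}(a-c)\to e^{\Phi(q)(b-a)}$ as $c\to-\infty$; the term $pW^{(q)}(b-a)$ is unchanged. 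This produces the first stated formula.

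Second, for $\tau_{c}^{-}$, I would instead let $b\to+\infty$ in \eqref{UB:c-}, so that $\tau_{b}^{+}\to\infty$ and $\{\tau_{c}^{-}<\tau_{b}^{+}\}\uparrow\{\tau_{c}^{-}<\infty\}$, again by monotone convergence on the left. On the right I would divide through by $W^{(q)}(b-c)$ and apply \eqref{eqn:limit:wz}: the ratio $W^{(q)}(b-a)/W^{(q)}(b-c)\to e^{-\Phi(q)(a-c)}=e^{\Phi(q)(c-a)}$ and $Z^{(q)}(b-c)/W^{(q)}(b-c)\to q/\Phi(q)$, which after rearranging yields the claimed second formula. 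The third identity, concerning $l(a,e_{q})$ with no spatial constraint, I would derive by letting both $b\to+\infty$ and $c\to-\infty$ in \eqref{UB:e} (or equivalently summing the two limiting expressions together with the contribution from $e_q$). Here I would again normalize by $W^{(q)}(b-c)$, use the two asymptotics $Z^{(q)}(\cdot)/W^{(q)}(\cdot)\to q/\Phi(q)$ and the convergence $e^{-\Phi(q)a}W^{(q)}(a)\to\Phi'(q)$ from the middle relation in \eqref{eqn:limit:wz}, and simplify; the answer collapses to $1/(1+p\Phi'(q))$.

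The main obstacle I anticipate is purely a matter of bookkeeping in the asymptotics rather than any genuine analytic difficulty: the ratios in \eqref{eqn:limit:wz} are stated for a single argument tending to infinity, so I must carefully split each difference such as $b-c$ into a fixed increment plus the diverging one and verify that the normalizing factor chosen (namely $W^{(q)}(a-c)$, $W^{(q)}(b-c)$, or their product) cancels consistently across numerator and denominator. A secondary point worth checking is the justification of passing the limit inside the expectation, which is immediate by monotone convergence since the integrands are nonnegative and the events increase to their limits; and in the third case one should confirm that the $q/\Phi(q)$ terms arising from the two $Z$-functions cancel so that the final expression depends only on $\Phi'(q)$. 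Once these cancellations are organized the three formulas follow directly.
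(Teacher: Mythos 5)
Your treatment of the first two identities is exactly the paper's route (send $c\to-\infty$ in \eqref{UB:b+}, send $b\to+\infty$ in \eqref{UB:c-}, normalize by $W^{(q)}(a-c)$ resp.\ $W^{(q)}(b-c)$, and apply \eqref{eqn:limit:wz}), and it is correct: in each of those cases only one barrier moves, so every ratio you invoke involves a fixed increment against a diverging argument and the limits are determinate; the monotone-convergence justification on the left-hand sides is also fine.

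The third identity is where there is a genuine gap. Normalizing \eqref{UB:e} by $W^{(q)}(b-c)$ turns its numerator into
\[
W^{(q)}(a-c)\Big[\frac{Z^{(q)}(b-c)-1}{W^{(q)}(b-c)}-\frac{Z^{(q)}(a-c)-1}{W^{(q)}(a-c)}\Big],
\]
and since now \emph{both} $b\to+\infty$ and $c\to-\infty$, this is the product of $W^{(q)}(a-c)\to\infty$ with a difference of two quantities each tending to $q/\Phi(q)$: an $\infty\cdot 0$ indeterminate form. The limits listed in \eqref{eqn:limit:wz} carry no rate information, so they cannot resolve it (one can construct ratios consistent with \eqref{eqn:limit:wz} for which this product diverges). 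Thus "confirming that the $q/\Phi(q)$ terms cancel" is not bookkeeping; it requires the strictly stronger fact that
\[
Z^{(q)}(x)-\tfrac{q}{\Phi(q)}W^{(q)}(x)=\Em_x\big(e^{-q\tau_0^-};\tau_0^-<\infty\big)\in[0,1]
\quad\text{and}\quad \longrightarrow 0 \ \text{ as } x\to\infty \ (q>0),
\]
which is standard but is not among the asymptotics you cite; with it, your computation does close (numerator $\to 1$, denominator $\to 1+p\Phi'(q)$). The cleanest repair is to sum \emph{before} passing to the limit, i.e.\ take the double limit in Corollary \ref{cor:1}, whose right-hand side $W^{(q)}(b-c)/\big(W^{(q)}(b-c)+pW^{(q)}(b-a)W^{(q)}(a-c)\big)$ contains no $Z$-terms: the single asymptotic $e^{-\Phi(q)x}W^{(q)}(x)\to\Phi'(q)$ gives $W^{(q)}(b-a)W^{(q)}(a-c)/W^{(q)}(b-c)\to\Phi'(q)$ and hence $1/(1+p\Phi'(q))$ with no indeterminacy. (Your parenthetical "summing the two limiting expressions" does not work as literally stated, since the limits of \eqref{UB:b+} and \eqref{UB:c-} are functions of $b$ and of $c$ respectively; the summation must be done pre-limit. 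Note also the paper's own derivation uses \eqref{UB:joint} rather than \eqref{UB:e} for this third formula, integrating the limiting resolvent density over $y\in\mathbb{R}$.)
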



%
\paragraph{Joint local times}
 Besides the local times at one level, we are also interested in
 the local times at different levels. To which end,
 we need two more auxiliary functions,
 which also turn out to be useful in studying the local times at inverse local time.

 For the rest of this section,
 let $\{p_{k}>0\}_{k\geq1}$ be a positive sequence
 and $\{a_{k}\}_{k\geq1}$ be an increasing sequence with $a_{1}>c$.
 We introduce the following auxiliary functions,
 which {are} defined inductively and {generalize} \eqref{eqn:1},
 for $x,y\in\mathbb{R}$ and $k\geq2$ ,
 \begin{equation}\label{eqn:2}
 \left\{ \begin{split}
 \mathsf{W}^{(q;p_{1},\cdots,p_{k})}_{(a_{1},\cdots,a_{k})}(x,y):=&\
\mathsf{W}^{(q;p_{1},\cdots,p_{k-1})}_{(a_{1},\cdots,a_{k-1})}(x,y)+ p_{k} W^{(q)}(x-a_{k})\mathsf{W}^{(q;p_{1},\cdots,p_{k-1})}_{(a_{1},\cdots,a_{k-1})}(a_{k},y),\\
\mathsf{Z}^{(q;p_{1},\cdots,p_{k})}_{(a_{1},\cdots,a_{k})}(x,c):=&\
\mathsf{Z}^{(q;p_{1},\cdots,p_{k-1})}_{(a_{1},\cdots,a_{k-1})}(x,c)+ p_{k} W^{(q)}(x-a_{k})\mathsf{Z}^{(q;p_{1},\cdots,p_{k-1})}_{(a_{1},\cdots,a_{k-1})}(a_{k},c).
 \end{split}\right.\end{equation}
Then we have the following result concerning the local times at different levels.

\begin{thm}\label{cor:5}
Let $\wnqp$ and $\znqp$ {be defined as} in \eqref{eqn:2}. We have for $x\in[c,b]$,
\begin{equation}\label{joint_a}
\Em_{x}\Big(\exp\big(-q \tau_{b}^{+}-\sum_{j=1}^{n}p_{j} l(a_{j},\tau_{b}^{+})\big);
\tau_{b}^{+}<\tau_{c}^{-}\Big)
=\frac{\wnqp(x,c)}{\wnqp(b,c)}
\end{equation}
and
\begin{align}\label{joint_b}
&\ \Em_{x}\Big(\exp\big(-q \tau_{c}^{-}-\sum_{j=1}^{n}p_{j} l(a_{j},\tau_{c}^{-})\big);
\tau_{c}^{-}<\tau_{b}^{+}\Big)\non\\
=&\ \znqp(x,c)-\frac{\wnqp(x,c)}{\wnqp(b,c)}
\znqp(b,c).
\end{align}
In addition, we also find  an expression of the resolvent density for $y\in (c, b)$,
\begin{align*}
&\ \int_{0}^{\infty} e^{-q t}
\Em_{x}\Big(\exp\big(-\sum_{j=1}^{n}p_{j} l(a_{j},t)\big);
X(t)\in dy, t<\tau_{b}^{+}\wedge\tau_{c}^{-}\Big)\,dt\\
=&\ \Big(\frac{\wnqp(x,c)}{\wnqp(b,c)} \wnqp(b,y)- \wnqp(x,y)\Big)\,dy.
\end{align*}
\end{thm}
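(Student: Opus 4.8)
The plan is to prove all three identities simultaneously by induction on $n$, the number of levels, using Theorem \ref{thm:loc:q} (the single-level case) as the base case $n=1$. The inductive mechanism I would exploit is the strong Markov property together with the special structure of the recursion \eqref{eqn:2}: adding a new level $a_{n}$ to the list corresponds to ``inserting'' an extra penalization of the local time at $a_{n}$, and the scale functions $\wnqp$ are built precisely so that passing through $a_{n}$ contributes a factor governed by the single-level formula. So the heart of the argument is a conditioning on the excursions of $X$ away from the top level $a_{n}$.

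Concretely, for the first identity I would proceed as follows. Fix the levels $a_{1}<\cdots<a_{n}$ and assume the result for $n-1$ levels. Starting from $x$, decompose the path at the successive visits to $a_{n}$. Before $X$ ever reaches $a_{n}$ (or if it never does before exiting), the only local times accrued are at $a_{1},\dots,a_{n-1}$, so the inductive hypothesis applies on the interval $[c,a_{n}]$ with killing at $\tau_{c}^{-}$ and at $\tau^{\{a_{n}\}}$ in place of $\tau_{b}^{+}$; meanwhile the local time $l(a_{n},\cdot)$ is governed by an independent exponential clock by Corollary \ref{cor:1}. I would set up the identity
\begin{align*}
\wnqp(x,c)
&= \mathsf{W}^{(q;p_{1},\cdots,p_{n-1})}_{(a_{1},\cdots,a_{n-1})}(x,c)
+ p_{n}\,W^{(q)}(x-a_{n})\,
\mathsf{W}^{(q;p_{1},\cdots,p_{n-1})}_{(a_{1},\cdots,a_{n-1})}(a_{n},c),
\end{align*}
and match the two summands against the probabilistic decomposition: the first term corresponds to paths that exit $[c,b]$ upward without ever touching $a_{n}$ (so $l(a_{n},\cdot)=0$), and the second term, carrying the factor $p_{n}W^{(q)}(x-a_{n})$, accounts for paths that first climb to $a_{n}$ and there begin accumulating local time at rate dictated by the single-level exponential law. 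The strong Markov property at $\tau^{\{a_{n}\}}$ converts the contribution from $x$ into the product of a hitting term and a quantity evaluated from $a_{n}$, which is exactly the recursive structure of \eqref{eqn:2}.

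The second identity (downward passage) and the resolvent density follow by the same inductive scheme, reusing the downward and resolvent parts of Theorem \ref{thm:loc:q} as the base cases and the same excursion decomposition at $a_{n}$. Since $\znqp$ and the resolvent expression obey the identical recursion in \eqref{eqn:2}, the bookkeeping is parallel. In fact, the cleanest route may be to prove the resolvent density first and then recover the two passage-time identities by integrating the resolvent density against the jump measure across the boundaries, exactly as Corollary \ref{cor:1} derives \eqref{UB:e} from \eqref{UB:joint}; this avoids repeating the excursion argument three times.

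The main obstacle I anticipate is making the excursion/strong-Markov decomposition at the top level $a_{n}$ rigorous, in particular verifying that the local times $l(a_{1},\cdot),\dots,l(a_{n-1},\cdot)$ accumulated during excursions below $a_{n}$ are correctly captured by the $(n-1)$-level scale function while $l(a_{n},\cdot)$ factorizes as an independent exponential clock. This requires care because the process returns to $a_{n}$ repeatedly, and one must sum a geometric-type series over the excursions and confirm that the resulting generating identity collapses to the single additive recursion in \eqref{eqn:2} rather than producing spurious cross terms. Alternatively, and perhaps more safely, one can sidestep the direct excursion theory by appealing to the integral-equation characterization of the $\omega$-scale functions promised in Section \ref{sec:proof}: one checks that both sides of \eqref{joint_a} solve the same integral equation with the weight function $\sum_{j}p_{j}\delta_{a_{j}}$, and invokes uniqueness. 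I would keep this as the fallback route, since the uniqueness argument is likely the rigorous backbone the paper actually uses.
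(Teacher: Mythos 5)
Your route --- induction on $n$ with a strong Markov/excursion decomposition at the top level $a_{n}$ --- is not the paper's proof, and as sketched it has genuine gaps at precisely the load-bearing points. (i) The core step, the joint law under $\Pm_{a_{n}}$, is never established. Corollary \ref{cor:1} gives only the \emph{marginal} exponential law of $l(a_{n},\cdot)$; it does not make $l(a_{n},\cdot)$ an ``independent exponential clock,'' and in fact $l(a_{n},\tau_{b}^{+})$ and $\sum_{j<n}p_{j}l(a_{j},\tau_{b}^{+})$ are dependent, because the lower local times accrue exactly during excursions below $a_{n}$, whose number and durations are tied to $l(a_{n},\cdot)$. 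To make your step rigorous one needs the exponential formula for the Poisson point process of excursions at $a_{n}$, the identification of the excursion-measure functionals (the $e^{-q\cdot}$-weighted measure of excursions reaching $b$, which equals $1/W^{(q)}(b-a_{n})$ in the occupation-density normalization, and the compensator of non-exit excursions, read off from the $(n-1)$-level identity), together with the matching of the normalization in \eqref{defn:localtime} with the excursion local time; this computation does yield $\wnqp(a_{n},c)/\wnqp(b,c)$, but none of it is carried out --- you flag it as the ``main obstacle'' and leave it open. (ii) Your matching of the two summands of \eqref{eqn:2} to path events is false: for $x<a_{n}$ the second summand vanishes identically (since $W^{(q)}(x-a_{n})=0$), yet by \eqref{cor:7a} every path from $x$ to $b$ hits $a_{n}$ and accrues strictly positive local time there a.s.\ on $\{\tau_{b}^{+}<\tau_{c}^{-}\}$; the recursion is not a decomposition into ``avoids $a_{n}$'' versus ``hits $a_{n}$'' and only emerges after the hitting decomposition is combined with the ratio in \eqref{joint_a}. (iii) For starting points $x>a_{n}$ the hitting factor $\Em_{x}\big(e^{-q\tau^{\{a_{n}\}}-\sum_{j<n}p_{j}l(a_{j},\tau^{\{a_{n}\}})};\tau^{\{a_{n}\}}<\tau_{b}^{+}\wedge\tau_{c}^{-}\big)$ itself involves the lower local times (the path may jump over $a_{n}$, accrue them, and return), so Lemma \ref{lem:hitting} does not apply and a multi-level analogue must first be derived from the inductive hypothesis --- a step your plan never identifies. (iv) Minor: recovering \eqref{joint_a} from the resolvent ``by integrating against the jump measure'' cannot work at the upper boundary, since the process has no positive jumps and creeps over $b$.

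The paper's actual proof is purely analytic and is closer to your fallback, but not identical to it: no integral equation with weight $\sum_{j}p_{j}\delta_{a_{j}}$ is ever posed. Instead each $p_{j}l(a_{j},\cdot)$ is approximated by occupation times with weights $\omega_{\varepsilon_{j}}=\frac{p_{j}}{2\varepsilon_{j}}\mathbf{1}_{\{|x-a_{j}|\leq\varepsilon_{j}\}}$; Proposition \ref{prop:levy:omega} (from \cite{BoLi:sub1}) is applied with the weight $q+\sum_{j}\omega_{\varepsilon_{j}}$; and Lemmas \ref{lem:equation:2} and \ref{lem:con:omega} are used to send $\varepsilon_{k}\to0$ one index at a time, showing the $\omega$-scale functions converge to the recursion \eqref{eqn:2}, while the expectations converge because of the $L^{2}$-convergence in \eqref{defn:localtime}. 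Uniqueness of solutions (Lemma \ref{lem:equation}) enters only to justify these limits, not as a characterization of both sides of \eqref{joint_a}. So to complete your proposal you must either carry out the excursion computation in (i)--(iii) in full, or abandon the induction and follow the approximation route, which is what the paper does.
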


%
\subsection{Local times at inverse local time}
Define the right continuous inverse local time at level $a$ by
\begin{equation}
l^{-1}(a,t):=\inf\{s\geq 0, l(a,s)>t\}, \quad t\geq 0.
\end{equation}
Notice that $\{l(a,T)>t\}=\{l^{-1}(a,t)<T\}$ for every $t>0$ and stopping time $T$.
One can check the following results directly,
by inverting the Laplace transform of $l(a,\cdot)$
from \eqref{UB:b+}, \eqref{UB:c-}, \eqref{UB:e} and Corollary \ref{cor:1},
respectively.
\begin{cor}\label{cor:4} For any $b>a>c$ and $t> 0$,
\begin{align*}
&\ \Pm_{a}\big(l^{-1}(a,t)<\tau_{b}^{+}\big| \tau_{b}^{+}<e_{q}\wedge\tau_{c}^{-}\big)
=\Pm_{a}\big(l^{-1}(a,t)<e_{q}\big| e_{q}<\tau_{b}^{+}\wedge\tau_{c}^{-}\big)
\\=&\
\Pm_{a}\big(l^{-1}(a,t)<\tau_{c}^{-}\big| \tau_{c}^{-}<e_{q}\wedge\tau_{b}^{+}\big)
=
\Pm_{a}\big(l^{-1}(a,t)<e_{q}\wedge\tau_{b}^{+}\wedge\tau_{c}^{-}\big)\\
=&\ \exp\big(-\frac{W^{(q)}(b-c) t}{W^{(q)}(b-a)W^{(q)}(a-c)}\big).
\end{align*}
\end{cor}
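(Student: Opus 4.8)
The plan is to read off each probability as a tail of the appropriate evaluation of $l(a,\cdot)$ and then invert the Laplace transforms already computed in the proof of Corollary \ref{cor:1}. The bridge is the identity $\{l^{-1}(a,t)<T\}=\{l(a,T)>t\}$, noted above for every $t>0$ and every stopping time $T$; it holds because $s\mapsto l(a,s)$ is continuous and increasing under $\Pm_a$ (here $W(0)=0$, so $a$ is regular for itself and $l(a,s)>0$ for all $s>0$) while $l^{-1}(a,\cdot)$ is its right-continuous inverse. Taking $T=\tau_b^+$, $T=e_q$, $T=\tau_c^-$ and $T=e_q\wedge\tau_b^+\wedge\tau_c^-$ turns the four left-hand sides into the conditional, respectively unconditional, tails of $l(a,\tau_b^+)$, $l(a,e_q)$, $l(a,\tau_c^-)$ and $l(a,e_q\wedge\tau_b^+\wedge\tau_c^-)$ on the corresponding events.

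The key structural observation is that the four transforms \eqref{UB:b+}, \eqref{UB:c-}, \eqref{UB:e} and the one in Corollary \ref{cor:1} share the common denominator $W^{(q)}(b-c)+pW^{(q)}(b-a)W^{(q)}(a-c)$, while their numerators do not depend on $p$. Writing $\alpha:=W^{(q)}(b-a)W^{(q)}(a-c)/W^{(q)}(b-c)$ and $\lambda:=1/\alpha$, each transform factorises as $C_A\,(1+p\alpha)^{-1}$; evaluating at $p=0$ identifies $C_A=\Pm_a(A)$, with $A$ the relevant event. Consequently the conditional Laplace transform of $l(a,T)$ given $A$ (and the plain transform in the Corollary \ref{cor:1} instance, where $A$ is the whole space and $C_A=1$) equals $(1+p\alpha)^{-1}$.

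Since $(1+p\alpha)^{-1}=\lambda/(\lambda+p)$ is precisely the Laplace transform of the exponential law of rate $\lambda$, inversion shows that in each case $l(a,T)$ is, conditionally on $A$, exponentially distributed with rate $\lambda=W^{(q)}(b-c)/(W^{(q)}(b-a)W^{(q)}(a-c))$. Its tail at $t$ is therefore $e^{-\lambda t}$, the same for all four instances; combined with the first-paragraph identity this yields the common value $\exp(-W^{(q)}(b-c)t/(W^{(q)}(b-a)W^{(q)}(a-c)))$ asserted in the statement.

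The computations are entirely routine once the shared denominator is recognised, so no analytic difficulty arises. The only place demanding genuine care is the stopping-time identity together with the conditioning bookkeeping: one must verify that the strict inequalities on the two sides of $\{l^{-1}(a,t)<T\}=\{l(a,T)>t\}$ match (using continuity of $l(a,\cdot)$, so that $l(a,T^-)=l(a,T)$), and that on each conditioning event the governing terminal time is the intended one, for instance that on $\{\tau_b^+<e_q\wedge\tau_c^-\}$ the accumulated local time is exactly $l(a,\tau_b^+)$. I expect this bookkeeping, rather than any estimate, to be the main point to get right.
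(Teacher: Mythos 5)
Your proposal is correct and takes essentially the same route as the paper: the paper likewise combines the identity $\{l(a,T)>t\}=\{l^{-1}(a,t)<T\}$ with inversion of the Laplace transforms \eqref{UB:b+}, \eqref{UB:c-}, \eqref{UB:e} and Corollary \ref{cor:1}. Your observation that all four transforms share the denominator $W^{(q)}(b-c)+pW^{(q)}(b-a)W^{(q)}(a-c)$ with $p$-free numerators, so that each conditional transform collapses to $(1+p\alpha)^{-1}$, is exactly the structure that makes the paper's ``one can check directly'' inversion work.
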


%
%

Notice that the above result on $l(a,\tau_{b}^{+})$ coincides with Corollary \ref{cor:7}. It is known that $\{l^{-1}(a,t),t\geq 0\}$ under $\Pm_{a}(\cdot)$ is a subordinator with Laplace exponent $(u^{(\lambda)}(0))^{-1}$, where $u^{(\lambda)}(y)$ is the $\lambda$-resolvent density of $X$ at $y$; see \cite[Chapter V.1.4]{Bertoin96:book}. Since
\[
u^{(\lambda)}(y)=\Big(\frac{1}{dx}\int_{0}^{\infty}e^{-\lambda t} \Pm(X_{t}\in dx)\,dt\Big)\Big|_{x=y}=\Phi'(\lambda)e^{-\Phi(\lambda)y}-W(-y),
\]
we have $(u^{(\lambda)}(0))^{-1}=\Phi'(\lambda)^{-1}$ which agrees with Corollary \ref{cor:2}.
Moreover,
we have the following joint Laplace transform of local times evaluated at the inverse local time, which extends Corollary \ref{cor:4}.

\begin{thm}\label{cor:6}
Let $a_{n}<b$ for some $n\in\mathbb{N}$
and $\wnqp$ {be defined as} in \eqref{eqn:2}.
Then
\begin{align}
&\ \Em_{a}\Big(\exp\Big(-q l^{-1}(a,t)-\sum_{j=1}^{n}p_{j} l(a_{j},l^{-1}(a,t))\Big); l^{-1}(a,t)<\tau_{b}^{+}\wedge\tau_{c}^{-}\Big)\non\\
=&\ \exp\Big(\frac{-\wnqp(b,c)t}{\wnqp(b,a) \wnqp(a,c)}\Big).
\end{align}
\end{thm}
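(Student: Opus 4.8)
The plan is to reduce the statement for the inverse local time $l^{-1}(a,t)$ to the already-established two-sided exit identities by exploiting the strong Markov property at the successive hitting times of level $a$, together with the subordinator structure of $t\mapsto l^{-1}(a,t)$ under $\Pm_a$. The key observation is that the quantity to be computed is multiplicative in $t$: writing $F(t)$ for the left-hand side, I would first show that $F$ satisfies $F(s+t)=F(s)F(t)$, so that $F(t)=e^{-\Theta t}$ for some constant $\Theta\geq 0$, and then identify $\Theta$ as $\wnqp(b,c)/(\wnqp(b,a)\,\wnqp(a,c))$.

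To establish the multiplicativity, I would use that $\{l^{-1}(a,\cdot)\}$ is the right-continuous inverse of the continuous increasing local time $l(a,\cdot)$, and that $X$ visits $a$ at time $l^{-1}(a,t)$ for each regularity level. On the event $\{l^{-1}(a,s+t)<\tau_b^+\wedge\tau_c^-\}$ one can decompose the path at the intermediate time $l^{-1}(a,s)$: by the strong Markov property at $l^{-1}(a,s)$, where $X$ equals $a$, the increment $l^{-1}(a,s+t)-l^{-1}(a,s)$ and the additional local times $l(a_j,l^{-1}(a,s+t))-l(a_j,l^{-1}(a,s))$ accumulated over the second stretch are independent of the first stretch and distributed as the corresponding quantities started afresh from $a$. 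The additive functional $q\,l^{-1}(a,\cdot)+\sum_j p_j l(a_j,l^{-1}(a,\cdot))$ then splits into a sum over the two stretches, which factorizes the exponential and yields $F(s+t)=F(s)F(t)$ after accounting for the two-sided-exit constraint $\tau_b^+\wedge\tau_c^-$ on both pieces.

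It remains to compute the rate $\Theta=-\lim_{t\to0+}t^{-1}\log F(t)$, equivalently $\Theta = \lim_{t\to 0+}t^{-1}(1-F(t))$. For this I would differentiate, or take small-$t$ asymptotics of, the exact expression for $\Em_a(e^{-ql(a,\cdot)\text{-type functional}})$ evaluated at a fixed time and read off the density of $l(a,\cdot)$ at $0$. Concretely, Theorem \ref{cor:5} gives the joint transform at $\tau_b^+$ and $\tau_c^-$ started from $a$; specializing those to $x=a$ and then, as in the proof of Corollary \ref{cor:1}, adding the first-passage pieces \eqref{UB:b+}, \eqref{UB:c-} and the resolvent contribution \eqref{UB:joint}, one obtains $\Em_a\big(\exp(-\sum_j p_j l(a_j,\cdot))\big)$ over the killed process as a single exponential-type expression whose decay rate is precisely $\wnqp(b,c)/(\wnqp(b,a)\,\wnqp(a,c))$. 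The cleanest route is to recognize that $l(a,e_q\wedge\tau_b^+\wedge\tau_c^-)$ is exponentially distributed with exactly this parameter (the multi-level analogue of Corollary \ref{cor:1}), and that for an exponentially distributed endpoint the inverse-local-time transform is the classical killing rate of the subordinator; this directly produces $F(t)=e^{-\Theta t}$ with the claimed $\Theta$.

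The main obstacle I anticipate is the rigorous justification of the path decomposition at $l^{-1}(a,s)$ and the handling of the killing constraint $\tau_b^+\wedge\tau_c^-$: one must verify that the excursions of $X$ away from $a$ that reach $b$ from above or $c$ from below occur at a local-time rate consistent with the two-sided exit identities, and that no local time at $a$ is accumulated during such a fatal excursion. This is where the subordinator/excursion theory of Bertoin \cite[Chapter V]{Bertoin96:book} must be invoked carefully, since the additive functional $\sum_j p_j l(a_j,l^{-1}(a,\cdot))$ involves local times at levels $a_j\neq a$ that are themselves built from the excursions of $X$ off $a$; ensuring that these contributions factorize correctly across the decomposition, and that the constant $\Theta$ indeed matches the multi-level scale-function ratio rather than some other combination, is the delicate point.
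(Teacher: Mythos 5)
Your first step---multiplicativity of $F$ via the strong Markov property at $l^{-1}(a,s)$, hence $F(t)=e^{-\Theta t}$---is correct, and it is close in spirit to the paper's own use of the Markov property at $l^{-1}(a,t)$. The genuine gap is the identification of $\Theta$, which is the actual content of the theorem, and neither of your proposed routes closes it. The ``cleanest route'' rests on a false statement: by Corollary \ref{cor:1}, $l(a,e_{q}\wedge\tau_{b}^{+}\wedge\tau_{c}^{-})$ is exponential with parameter $W^{(q)}(b-c)/\big(W^{(q)}(b-a)W^{(q)}(a-c)\big)$, i.e.\ the $n=0$ parameter, not $\wnqp(b,c)/\big(\wnqp(b,a)\wnqp(a,c)\big)$; the multi-level parameter can only belong to the local time at $a$ evaluated at a killing time that \emph{also} kills at rate $p_{j}$ per unit of local time at each $a_{j}$, and computing the law of that random variable is precisely what has to be proved. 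Your ``concrete'' route does not produce it either: summing the multi-level analogues of \eqref{UB:b+}, \eqref{UB:c-} and \eqref{UB:joint} yields transforms of the local times $l(a_{j},\cdot)$ evaluated at the killing time $e_{q}\wedge\tau_{b}^{+}\wedge\tau_{c}^{-}$, whereas in $F(t)$ they are evaluated at $l^{-1}(a,t)$; these are different random variables, and no ``recognition'' converts one into the other.

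The missing device is the one the paper uses in Proposition \ref{thm:occu}: introduce a \emph{separate} Laplace variable $\lambda$ for the local time at the level $a$ itself, on top of the weights you keep. With $L$ as in \eqref{defn:L}, Remark \ref{rmk:limit:WZe} gives
\[
\Em_{a}\big(e^{-L(\tau_{b}^{+})-\lambda l(a,\tau_{b}^{+})};\tau_{b}^{+}<\tau_{c}^{-}\big)
=\frac{\wo(a,c)}{\wo(b,c)+\lambda\,\wo(b,a)\wo(a,c)},
\]
since $\wo(a,a)=W(0)=0$. Inverting this in $\lambda$ shows that, under the weighting $e^{-L(\tau_{b}^{+})}\mathbf{1}_{\{\tau_{b}^{+}<\tau_{c}^{-}\}}$, the variable $l(a,\tau_{b}^{+})$ is defectively exponential with rate $\Theta=\wo(b,c)/\big(\wo(b,a)\wo(a,c)\big)$, and the Markov property at $l^{-1}(a,t)$ on $\{l(a,\tau_{b}^{+})>t\}=\{l^{-1}(a,t)<\tau_{b}^{+}\}$ then factors out exactly $F(t)$ and identifies it. Theorem \ref{cor:6} follows by taking $\omega=q+\omega_{n}$ (sums of approximate delta functions at the $a_{j}$) and passing to the limit via Lemmas \ref{lem:equation:2} and \ref{lem:con:omega}; your multiplicativity step then becomes redundant. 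If you insist on staying at the discrete level and quoting Theorem \ref{cor:5} directly, you must apply it with the $n+1$ levels $a_{1},\dots,a_{m},a,a_{m+1},\dots,a_{n}$ and justify the insertion identities
$\mathsf{W}^{(q;p_{1},\dots,\lambda,\dots,p_{n})}_{(a_{1},\dots,a,\dots,a_{n})}(b,c)=\wnqp(b,c)+\lambda\,\wnqp(b,a)\wnqp(a,c)$ and
$\mathsf{W}^{(q;p_{1},\dots,\lambda,\dots,p_{n})}_{(a_{1},\dots,a,\dots,a_{n})}(a,c)=\wnqp(a,c)$,
which do not follow immediately from the recursion \eqref{eqn:2} (that recursion is tied to the increasing ordering of the levels) but can be extracted from Proposition \ref{prop:1} together with $W^{(q)}(0)=0$. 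Without the extra variable $\lambda$ there is nothing to invert, and the rate $\Theta$ cannot be read off from the transforms you listed.
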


{ The local time process $l(a,\cdot)$ is the time scale of
the  excursion process that is a Poisson point process  taking values from the space of excursion sample paths of $X$ away from $a$.}
{Excursion theory finds successful applications in the study of
local times. Many results in this section can also be proved or interpreted using the excursion theory.
For example,
{the exponential result in} \eqref{cor:7b} follows from excursion theory since
$l(a,\tau_{b}^{+})$ is the time of the first excursion started at level $a$ that first leaves the interval $(c, b)$ from above.
{So does the exponential result in Theorem \ref{cor:6}.}
{The independence between $X(e_{q})$ and $l(a,e_{q})$}
 in Corollary \ref{cor:3} were proved in \cite[Proposition 3.2]{Mansuy2008:book} for
Brownian motion, and it is {actually} a consequence of a well known property
of the process of excursions of $X$ away from $a$ as a  Poisson point process indexed by the local time of $X$ at level $a$,
as well as the independence shown in Corollary \ref{cor:4}.}
 The associated results on excursion measures can also be derived from the corollaries using the excursion theory.  We leave the details to interested readers.

\subsection{Matrix expressions
and
permanental processes}

For fixed $n\in\mathbb{N}$, let $\wnqp$ and $\znqp$ be as defined in \eqref{eqn:2}.
Here we first present matrix expressions for the auxiliary functions,
which  facilitates the later computations.
By further looking into the joint local times,
we connect our results with the theory of Markovian loop soups and permanental processes,
see for example \cite{EISENBAUM20091401,fitzsimmons2014}.

In this subsection,
we denote by
$\boldsymbol{\nu}=(\nu_{j})_{1\leq j\leq n}$ an $n$-dimensional vector,
by $\mathbf{M}=(m_{ij})_{1\leq i, j\leq n}$ an $n\times n$-matrix with respective entries,
by $\mathbf{M}^{\mathrm{T}}$ the transpose of $\mathbf{M}$
and by $\mathbf{I}$ the identity matrix.
We also need the following notations:
\begin{equation}\label{eqn:3.3.g}
\begin{split}
&\bs:=\big(W^{(q)}(a_{i}-a_{j})\big)_{1\leq i, j\leq n}, \quad
\ba(x):=\big(W^{(q)}(x-a_{i})\big)_{1\leq i\leq n},\\
&
\bb(y):=\big(W^{(q)}(a_{i}-y)\big)_{1\leq i\leq n},
\quad\br:=\big(Z^{(q)}(a_{i}-c)\big)_{1\leq i\leq n},
\end{split}
\end{equation}
and the diagonal matrix $\bl=\text{diag}(p_{1},p_{2}, \cdots,p_{n})$. Then we have the following representations of the generalized scale functions.

\begin{prop}\label{prop:1}
For $x,y\in\mathbb{R}$, we have
\begin{align}
\wnqp(x,y)=&\ \det\begin{pmatrix}
W^{(q)}(x-y) & \ba^{\mathrm{T}}(x)\\
-\bl \bb(y) & \mathbf{I}-\bl\bs\\
\end{pmatrix}\label{eqn:3}
\end{align}
and
\begin{align}
\znqp(x,c)=&\ \det\begin{pmatrix}
Z^{(q)}(x-c) & \ba^{\mathrm{T}}(x)\\
-\bl \br & \mathbf{I}-\bl\bs
\end{pmatrix}.\label{eqn:4}
\end{align}
\end{prop}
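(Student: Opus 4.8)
The plan is to prove \eqref{eqn:3} and \eqref{eqn:4} simultaneously by induction on $n$, showing that the determinant on the right reproduces the inductive definition \eqref{eqn:2}. The engine of the argument is one structural observation: since $\{a_k\}$ is increasing and $W^{(q)}(z)=0$ for $z\le 0$ (with $W^{(q)}(0)=W(0)=0$ in the unbounded variation case), the matrix $\bs=\big(W^{(q)}(a_i-a_j)\big)_{ij}$ is strictly lower triangular. Hence $\bl\bs$ is strictly lower triangular, $\mathbf{I}-\bl\bs$ is unit lower triangular, and in particular its last column equals $(0,\ldots,0,1)^{\mathrm{T}}$. For the base case $n=1$ I would simply expand the $2\times 2$ determinant and compare with \eqref{eqn:1}, again using $W^{(q)}(0)=0$.

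For the inductive step, write $M_n$ for the $(n+1)\times(n+1)$ matrix in \eqref{eqn:3} and expand $\det M_n$ along its last column. By the structural observation that column has only two nonzero entries: $W^{(q)}(x-a_n)$ in the top (row $1$) slot and $1$ in the bottom (row $n+1$) slot. The bottom entry carries cofactor sign $(-1)^{2(n+1)}=+1$ and its minor is $\det M_n$ with last row and column deleted, which is precisely the size-$(n-1)$ matrix, i.e. $\mathsf{W}^{(q;p_{1},\cdots,p_{n-1})}_{(a_{1},\cdots,a_{n-1})}(x,y)$. For the cofactor of the top entry I would delete row $1$ and the last column; the surviving last row carries a common factor $-p_n$ (its entries are $-p_nW^{(q)}(a_n-y)$ and $-p_nW^{(q)}(a_n-a_j)$), and after pulling this factor out the remaining matrix is the size-$(n-1)$ matrix for $\mathsf{W}^{(q;p_{1},\cdots,p_{n-1})}_{(a_{1},\cdots,a_{n-1})}(a_n,y)$ with its ``$a_n$-row'' moved from the top to the bottom. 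Undoing that cyclic row shift contributes a sign $(-1)^{n-1}$, which together with the cofactor sign $(-1)^{1+(n+1)}$ collapses to leave exactly $p_n\,\mathsf{W}^{(q;p_{1},\cdots,p_{n-1})}_{(a_{1},\cdots,a_{n-1})}(a_n,y)$. Collecting the two terms reproduces the recursion \eqref{eqn:2}, completing the induction; the argument for \eqref{eqn:4} is identical, since replacing $W^{(q)}(x-y)$ by $Z^{(q)}(x-c)$ and $-\bl\bb(y)$ by $-\bl\br$ alters only the first row and first column while leaving the last column and the factored last row in the same shape.

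I expect the only real obstacle to be the sign bookkeeping in the second cofactor: one must simultaneously track the cofactor sign, the scalar $-p_n$ pulled from the last row, and the parity of the cyclic permutation that returns the ``$a_n$-row'' to its canonical top position, and check that these three contributions cancel to give the clean coefficient $+p_n$ demanded by \eqref{eqn:2}.

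As an alternative that sidesteps the permutation count, I could instead evaluate $\det M_n$ through the Schur complement of the block $\mathbf{I}-\bl\bs$: since that block is unit lower triangular its determinant is $1$, so $\det M_n=W^{(q)}(x-y)+\ba^{\mathrm{T}}(x)(\mathbf{I}-\bl\bs)^{-1}\bl\,\bb(y)$. Solving the recursion \eqref{eqn:2} in closed form gives $\wnqp(x,y)=W^{(q)}(x-y)+\ba^{\mathrm{T}}(x)\bl(\mathbf{I}-\bs\bl)^{-1}\bb(y)$, and the two expressions agree by the elementary identity $(\mathbf{I}-\bl\bs)^{-1}\bl=\bl(\mathbf{I}-\bs\bl)^{-1}$.
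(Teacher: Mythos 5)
Your proposal is correct, and your primary argument takes a genuinely different route from the paper's. The paper does not induct on the determinant at all: it first establishes (Lemma \ref{lem:2}) the ``full-system'' identity
\begin{equation*}
\wnqp(x,y)=W^{(q)}(x-y)+\sum_{k=1}^{n}p_{k}W^{(q)}(x-a_{k})\wnqp(a_{k},y),
\end{equation*}
proved there by approximating with $\omega$-scale functions for sums of approximate delta weights and passing to the limit; it then sets $x=a_{i}$ to get a linear system, uses the strict lower-triangularity of $\bl\bs$ to solve it as $\bl\big(\wnqp(a_{i},y)\big)=(\mathbf{I}-\bl\bs)^{-1}\bl\bb(y)$, and finally identifies $W^{(q)}(x-y)+\ba^{\mathrm{T}}(x)(\mathbf{I}-\bl\bs)^{-1}\bl\bb(y)$ with the determinant \eqref{eqn:3} via a block-matrix multiplication and $\det(\mathbf{I}-\bl\bs)^{-1}=1$. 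That is precisely your ``alternative'' Schur-complement route, so your fallback coincides with the paper; the one point you would still need to spell out there is the closed-form solution of the recursion \eqref{eqn:2}, which does follow algebraically because $W^{(q)}(a_{k}-a_{n})=0$ for $k\le n$ forces $\wnqp(a_{k},y)=\mathsf{W}^{(q;p_{1},\cdots,p_{n-1})}_{(a_{1},\cdots,a_{n-1})}(a_{k},y)$, so the level-$n$ recursion upgrades to the full-system identity by induction. Your main proof --- cofactor expansion along the last column, whose only nonzero entries are $W^{(q)}(x-a_{n})$ and $1$ thanks to the increasing levels and $W^{(q)}(0)=W(0)=0$, with the sign count $(-1)^{n+2}\cdot(-1)\cdot(-1)^{n-1}=+1$ --- is sound (I verified the bookkeeping) and is arguably more elementary: it is purely linear-algebraic, self-contained, and bypasses Lemma \ref{lem:2} entirely. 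What the paper's route buys in exchange is the explicit representation $\wnqp(x,y)=W^{(q)}(x-y)+\ba^{\mathrm{T}}(x)(\mathbf{I}-\bl\bs)^{-1}\bl\bb(y)$ and the symmetric characterization of Lemma \ref{lem:2}, both of which are reused in the permanental-process computations later in Section 3.3, whereas your induction yields only the determinant identity itself.
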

Notice that $W^{(q)}(x-y)=0$ for $x\leq y$,
$\big(\mathbf{I}-\bl\bs\big)$ is a  lower triangular matrix with entries $1$ on the diagonal.
Suppose that $a\in[a_{m},a_{m+1})$ for some $m\leq n$.
It follows from Proposition \ref{prop:1} that
\[\wnqp(a,c)=W^{(q;p_{1},\cdots,p_{m})}_{(a_{1},\cdots,a_{m})}(a,c)\quad
\text{and}\quad
\wnqp(b,a)=W^{(q;p_{m+1},\cdots,p_{n})}_{(a_{m+1},\cdots,a_{n})}(b,a).\]

By the Ray-Knight theorem, see for example \cite{Mansuy2008:book},
 the local times of Brownian motion are identical in law to {a squared} Beseel process.
A similar relationship was shown in \cite{Eisenbaum2000} between symmetric Markov processes
and a family of squares of Gaussian processes.
For  general Markov process, the representation the local times involves  a so called permanental process.
A permanental process with parameter set $\mathcal{E}$ is a positive process whose finite-dimensional
Laplace transform is given by a negative power of a determinant.

\begin{defn}
A positive-valued  process $\{\xi_{x}, x\in\mathcal{E}\}$ is called a permanental process
with kernel $\big(g(x,y), x,y\in\mathcal{E}\big)$ and index $\beta>0$,
if its finite-dimensional Laplace transforms satisfy,
for every {$m\in\mathbb{N}$}, $(p_{1}, \cdots, p_{m})\in\mathbb{R}^{+,m}$ and $(x_{1},\cdots,x_{m})\in\mathcal{E}^{m}$,
\[
\Em\Big(\exp\big(-\frac{1}{2}\sum_{j=1}^{m} p_{j}\xi_{x_{j}}\big)\Big)
= \big(\det(\mathbf{I}+ \bl \mathbf{G})\big)^{-1/\beta}
\]
where $\bl=\text{diag}(p_{1},\cdots,p_{m})$ is a diagonal matrix
and $\mathbf{G}=\big(g(x_{i},x_{j})\big)_{1\leq i,j\leq m}$.
\end{defn}

An equivalent definition for the permanental process can be found in \cite{fitzsimmons2014}
by specifying its moments, which is called an $\alpha$-permanental process with $\alpha\beta=1$.
A necessary and sufficient condition on matrices $\mathbf{G}$ and $\beta>0$
for the existence of the corresponding permanental process was established in
\cite{articledavid}.


As an application of the results in this paper, we first present a new proof of
the following known identity on the relationship between the local time process and the permanental process.
Let $Y$ be a transient Markov processes on $\mathcal{E}=\mathbb{R}$,
$g(x,y)$ be the potential density with respect to a reference measure $m(dx)$,
and $\{l(x,t),x\in\mathcal{E}, t\geq0\}$ be a normalized local time process,
that is $g(x,y)=\Em_{x}(l(y,\infty))$.
 It was shown in \cite{fitzsimmons2014}  that there exists a permanental process
$(\xi_{x},x\in\mathcal{E})$ associated with $Y$,
with kernel $g(x,y)$ and index $\beta=2$, and is independent of $Y$.
In particular, it was shown in \cite[Thm 3.2]{EISENBAUM20091401} that
for every $a\in\mathcal{E}$ with $g(a,a)>0$,
\begin{equation}\label{eqn:3.3.2}
\big(l(x,\infty)\big|_{\tilde{\Pm}_{a}}+ \frac{1}{2}\xi_{x}, x\in\mathcal{E}\big)
\stackrel{d}{=}
\big(\frac{1}{2}\xi_{x}, x\in\mathcal{E}\big)\big|_{Q},
\end{equation}
where $Q$ is a change of measure defined by $Q(\eta):=\frac{\Em(\xi_{a} \eta)}{g(a,a)}$
for any random variable $\eta$,
and where $\tilde{\Pm}_{a}$  is another change of probability measure defined by
\[
d\tilde{\Pm}_{a}\Big|_{\mathscr{F}_{t}}= \frac{g(Y_{t},a)}{g(a,a)}
d\Pm_{a}\Big|_{\mathscr{F}_{t}},
\quad\text{{for all $t>0$}}
\]
which is the {probability law} that $Y$ starts at $a$ and gets killed on its last exit from $a$.
The expectation with respect to $\tilde{\Pm}_{a}$ is denoted by $\tilde{\Em}_{a}$.

\

For the spectrally negative L\'evy process $X$,
we can define an associated transient Markov process
\begin{equation}\label{eqn:3.3.a}
Y:= X\circ k_{\tau_{b}^{+}\wedge\tau_{c}^{-}\wedge e_{q}},
\end{equation}
where $k_{t}$ denotes the killing operator
and $e_{q}$ is, as previously defined, the exponential random variable  independent of $X$.
{Then process $X$ is killed either at its first time of leaving interval $[c,b]$ or at the random time $e_{q}$.}
We are going to reprove \eqref{eqn:3.3.2} using the results obtained in this section.

It is not hard to check that, for any measurable function $f\geq0$,
\begin{align*}
&\ \int_{0}^{\infty}\Em_{x}(f(Y_{t}))\,dt
=\int_{0}^{\infty} \Em_{x}\big(f(X_{t}); t<\tau_{b}^{+}\wedge\tau_{c}^{-}\wedge e_{q}\big)\,dt\\
=&\ \int_{c}^{b} f(y)
\Big(\frac{W^{(q)}(x-c)}{W^{(q)}(b-c)}W^{(q)}(b-y)-W^{(q)}(x-y)\Big)\,dy.
\end{align*}
Then for any $x,y\in(c,b)$, the potential density $g(x,y)$ of $Y$ is given by
 \[g(x,y)=\frac{W^{(q)}(x-c)}{W^{(q)}(b-c)}W^{(q)}(b-y)-W^{(q)}(x-y).\]
And by the definition, the permanental process $\xi$ with index $\beta=2$ satisfies
\begin{equation}\label{eqn:3.3.d}
\Em\big(\exp(-\sum_{j=1}^{n} \frac{p_{j}}{2}\xi(a_{j}))\big)
= \Big(\det\big(\mathbf{I}+ \bl \mathbf{G}\big)\Big)^{-1/2}
\end{equation}
{with $\mathbf{G}=\big(g(a_{i},a_{j})\big)_{1\leq i,j\leq n}$.}
Then {for any $a\in(c,b)$},
\[
\Em\Big(\xi(a)\exp(-\sum_{j=1}^{n} \frac{p_{j}}{2}\xi(a_{j}))\Big)
=
\big(\det \big(\mathbf{I}+\bl \mathbf{G}\big)\big)^{-\frac{3}{2}}
\det\begin{pmatrix}
g(a,a) & \big(g(a,a_{j})\big)^{\mathrm{T}}\\
\bl \big(g(a_{i},a)\big)& \mathbf{I}+ \bl\mathbf{G}
\end{pmatrix}\]
which follows from \eqref{eqn:3.3.d};
also see \cite[eqn(5.8)]{EISENBAUM20091401}.
To show \eqref{eqn:3.3.2} we  only need to show the identity
\begin{equation}\label{eqn:3.3.b}
\tilde{\Em}_{a}\big(\exp\big(
-\sum_{j=1}^{n}p_{j}l(a_{j},\infty)\big)\big)=
\det\begin{pmatrix}
g(a,a) & \big(g(a,a_{j})\big)^{\mathrm{T}}\\
\bl \big(g(a_{i},a)\big)& \mathbf{I}+ \bl\mathbf{G}
\end{pmatrix}
\frac{\big(\det \big(\mathbf{I}+\bl \mathbf{G}\big)\big)^{-1}}{g(a,a)}.
\end{equation}
To this end, we make use of the  relation from \cite{fitzsimmons2014} that
\[
\tilde{\Em}_{a}\big(F\big)
=\frac{1}{g(a,a)} \Em_{a}\Big(\int_{0}^{\infty} F\circ k_{t}\ l(a,dt)\Big)
\]
for any measurable function $F$.
Then the expectation in \eqref{eqn:3.3.b} equals to
\begin{align*}
&\ \frac{1}{g(a,a)}
\Em_{a}\Big(\int_{0}^{\infty} e^{-\sum_{j=1}^{n} p_{j}l(a_{j},t)}
\mathbf{1}_{\{t<\tau_{b}^{+}\wedge\tau_{c}^{-}\wedge e_{q}\}} l(a,dt)\Big)\\
=&\ \frac{1}{g(a,a)}
\int_{0}^{\infty}
\Em_{a}\big(e^{-\sum_{j=1}^{n} p_{j}l(a_{j}, l^{-1}(a,s))};
l^{-1}(a,s)<\tau_{b}^{+}\wedge\tau_{c}^{-}\wedge e_{q}\big)\,ds\\
=&\ \frac{1}{g(a,a)}
\int_{0}^{\infty}\Em_{a}
\big(e^{-q l^{-1}(a,s)- \sum_{j=1}^{n}p_{j}l(a_{j},l^{-1}(a,s))};
l^{-1}(a,s)<\tau_{b}^{+}\wedge \tau_{c}^{-}\big)\,ds
\end{align*}
where a change of variable
and the Fubini theorem are applied for the second equality.
By Theorem \ref{cor:6}, we have
\begin{equation}\label{eqn:3.3.c}
\tilde{\Em}_{a}\big(\exp\big(
-\sum_{j=1}^{n}p_{j}l(a_{j},\infty)\big)\big)=
\frac{\wnqp(b,a)\wnqp(a,c)}{\wnqp(b,c)\times g(a,a)}.
\end{equation}
To show \eqref{eqn:3.3.b} it is thus sufficient to show that
for any $a\in(c,b)$,
\begin{align}
\frac{\wnqp(b,c)}{W^{(q)}(b-c)}=&\
\det\big(\mathbf{I}+\bl \mathbf{G}\big)\label{eqn:3.3.6}
\end{align}
and
\begin{align}
\frac{\wnqp(b,a)\wnqp(a,c)}{W^{(q)}(b-c)}=&\
\det\begin{pmatrix}
g(a,a) & \big(g(a,a_{j})\big)^{\mathrm{T}}\\
\bl \big(g(a_{i},a)\big)& \mathbf{I}+ \bl\mathbf{G}
\end{pmatrix}.\label{eqn:3.3.7}
\end{align}

Recalling the notation in \eqref{eqn:3.3.g}
{we have}
\begin{align*}
g(a,a)=&\
\frac{W^{(q)}(a-c)W^{(q)}(b-a)}{W^{(q)}(b-c)}, &
g(a,a_{j})=&\
\frac{W^{(q)}(a-c)}{W^{(q)}(b-c)}\alpha_{j}(b)
-\alpha_{j}(a),\\
g(a_{i},a)=&\
\frac{W^{(q)}(b-a)}{W^{(q)}(b-c)}\beta_{i}(c)-\beta_{i}(a), &
g(a_{i},a_{j})=&\
\frac{\beta_{i}(c)\alpha_{j}(b)}{W^{(q)}(b-c)}
- \bs_{ij}.
\end{align*}
By Proposition \ref{prop:1}, it gives
\begin{equation*}
\begin{split}
\frac{\wnqp(b,c)}{W^{(q)}(b-c)}
&=\det\begin{pmatrix}
1 & \big(\frac{\alpha_{j}(b)}{W^{(q)}(b-c)}\big)^{\mathrm{T}}\\
-\big(p_{i}\beta_{i}(c)\big) &
\big(\delta_{ij}-p_{i}\bs_{ij}\big)
\end{pmatrix}\\
&= \det\begin{pmatrix}
1 & \big(\frac{\alpha_{j}(b)}{W^{(q)}(b-c)}\big)^{\mathrm{T}}\\
\mathbf{0}&\
\Big(\delta_{ij}+ p_{i}
\big(\frac{\alpha_{j}(b)\beta_{i}(c)}{W^{(q)}(b-c)}-\bs_{ij}\big)\Big)
\end{pmatrix}\\
&= \det\Big(\mathbf{I}
+ \bl \big(\frac{\alpha_{j}(b)\beta_{i}(c)}{W^{(q)}(b-c)}-\bs_{ij}\big)\Big)\\
&=\det\Big(\mathbf{I}+\bl\mathbf{G}\Big),
\end{split}
\end{equation*}
which establishes \eqref{eqn:3.3.6}.

To show \eqref{eqn:3.3.7}, note that
\[
\det\begin{pmatrix}
g(a,a) & \big(g(a,a_{j})\big)^{\mathrm{T}}\\
\bl \big(g(a_{i},a)\big)& \mathbf{I}+ \bl\mathbf{G}
\end{pmatrix}
=- \det\begin{pmatrix}
0&  \big(g(a,a_{j})\big)^{\mathrm{T}} & g(a,a) \\
\mathbf{0}&  \mathbf{I}+ \bl\mathbf{G} & \bl\big(g(a_{i},a)\big)\\
1&
\big(-\frac{\alpha_{j}(b)}{W^{(q)}(b-c)}\big) & -\frac{W^{(q)}(b-a)}{W^{(q)}(b-c)}
\end{pmatrix}.
\]
{For the  matrix on the right hand side of the above equation, first adding to the first row with the $(n+2)$-th row multiplied by $W^{(q)}(a-c)$, and then adding
to every $(i+1)$-th row with the $(n+2)$-th row multiplied by $p_{i}\beta_{i}(c)$, respectively, for $i=1,\ldots,n$,}
we have
\begin{align*}
&\ -\det\begin{pmatrix}
W^{(q)}(a-c) & \big(-\alpha_{j}(a)\big)^{\mathrm{T}}& 0\\
\big(p_{i}\beta_{i}(c)\big)& \big(\delta_{ij}- p_{i}\bs_{ij}\big) & \big(-p_{i}\beta_{i}(a)\big)\\
1 &
\big(-\frac{\alpha_{j}(b)}{W^{(q)}(b-c)}\big)^{\mathrm{T}} & -\frac{W^{(q)}(b-a)}{W^{(q)}(b-c)}
\end{pmatrix}\\
=&\ \frac{1}{W^{(q)}(b-c)}
\det\begin{pmatrix}
W^{(q)}(a-c)& \ba^{\mathrm{T}}(a) & 0\\
-\bl\bb(c)& \mathbf{I}-\bl\bs & -\bl\bb(a)\\
W^{(q)}(b-c)& \ba^{\mathrm{T}}(b) & W^{(q)}(b-a)
\end{pmatrix}.
\end{align*}
{Let $m:=\max\{k\leq n, a_{k}\leq a\}$.} Notice that
$\alpha_{j}(a)=0=\beta_{i}(a)$ for $j>m\geq i$.
The matrix above can be divided into four sub-matrices,
with the upper-right $(m+1)\times (n-m+1)$ sub-matrix equals to $\mathbf{0}$.
We can then prove \eqref{eqn:3.3.7} using \eqref{eqn:3} in Proposition \ref{prop:1} and
the facts that
$\wnqp(a,c)=W^{(q;p_{1},\cdots,p_{m})}_{(a_{1},\cdots,a_{m})}(a,c)$
and
$\wnqp(b,a)=W^{(q;p_{m+1},\cdots,p_{n})}_{(a_{m+1},\cdots,a_{n})}(b,a)$.



The  permanental process (vector) provides a representation
of the law of the local time process for a general Markov process
taking values in space {$\mathcal{E}$}.
Markovian loop soup
provides a  construction of the permanental process from the Markov process with a different approach.
The Markovian loop soup $\mathcal{L}$ defined  in \cite{fitzsimmons2014}
is a Poisson point process associated to the transient Markov process $Y$. Its characteristic measure, denoted by $\mu$ and called the loop measure, on a space   
$\mathscr{E}$ of right-continuous paths $\epsilon$ taking values in $\mathcal{E}\cup \Delta$ with $\Delta$ denoting  a cemetery state,
is defined by, for any function $F$  on $\mathscr{E}$,
\[
\mu(F):=\int_{\mathcal{E}} \Pm_{x}\big(\int_{0}^{\infty} \frac{1}{t} F\circ k_{t} l(x,dt)\big)m(dx),
\]
where the measure $l(x,dt)$ is induced by the local time $l(x,t)$ for the process $Y$ and $m(dx)$ is the corresponding reference measure; see also \cite{EISENBAUM20091401,Eisenbaum2017} and references therein.

It is shown in \cite[Thm.3.1]{fitzsimmons2014}  that the ``loop soup local time''
$\widehat{\mathcal{L}}_{\alpha}:=2\sum_{\epsilon\in\mathcal{L}_{\alpha}} l(\cdot,\infty)(\epsilon)$
 is an $\alpha$-permanental vector with kernel $g(x,y)$.
A version of the identity \eqref{eqn:3.3.2},
called the isomorphism theorem,
for the $\alpha$-permanental process
via loop soup can be found in \cite{fitzsimmons2014}.

 By the exponential formula for Poisson point processes,
 see for example \cite[O.5]{Bertoin96:book}, we obtain an alternative representation of \eqref{eqn:3.3.d}.
\begin{equation}\label{loop_soup}
\begin{split}
&\Em\Big(\exp\Big(-\sum_{\epsilon\in\mathcal{L}_{\alpha}}
\big(\sum_{j=1}^{n}p_{j}l(a_{j},\infty)(\epsilon)\big)\Big)\Big)\\
=&\exp\Big(-\alpha
\int_{\mathscr{E}}\big(1-\exp
(\sum_{j=1}^{n}p_{j}l(a_{j},\infty)(\epsilon))\big) \mu(d\epsilon)\Big).
\end{split}
\end{equation}
Note that for the permanental process $\xi$ in \eqref{eqn:3.3.d},
$\alpha=\frac{1}{\beta}=\frac{1}{2}$ and $m(dx)=1_{\{x\in(c,b)\}}\,dx$.

To reconcile \eqref{eqn:3.3.d} and \eqref{loop_soup} for the process $Y$ defined in \eqref{eqn:3.3.a},
using the results in this section we proceed  to show that
\begin{equation}\label{eqn:3.3.e}
\mu\big(1- e^{-\sum_{j=1}^{n}p_{j}l(a_{j},\infty)}\big)
=\ln\Big(\det\big(\mathbf{I}+\bl\mathbf{G}\big)\Big)
=\ln\Big(\frac{\wnqp(b,c)}{W^{(q)}(b-c)}\Big).
\end{equation}

\newcommand{\wlqp}{W^{(\lambda+q;p_{1},\cdots,p_{n})}_{(a_{1},\cdots,a_{n})}}

Following the idea of deriving Lemma 2.1 in \cite{fitzsimmons2014},
\[
\mu(F)=
\int_{\mathcal{E}}\int_{0}^{\infty}
\Pm_{x}\big( \int_{0}^{\infty} e^{-\lambda t} F\circ k_{t} l(x,dt)\big) d\lambda m(dx).
\]
We first notice that, for any $a\in(c,b)$ and $\lambda>0$
and for function
$$F(\epsilon)=1-e^{-\sum_{j=1}^{n}p_{j}l(a_{j},\infty)(\epsilon)}, \,\, \epsilon\in\mathscr{E},$$
\[
\Pm_{a}\Big(\int_{0}^{\infty} e^{-\lambda t} F\circ k_{t} l(a,dt)\Big)=
\Em_{a}\Big(\int_{0}^{\infty} e^{-\lambda t}
\big(1- e^{-\sum_{j=1}^{n}p_{j}l(a_{j},t)}\big)\mathbf{1}_{\{t<\tau_{b}^{+}\wedge\tau_{c}^{-}\wedge e_{q}\}}
l(a,dt)\Big).
\]
By the change of variable and the Fubini theorem, the right hand side of the above equation is equal to
\begin{align*}
&\ \int_{0}^{\infty}
\Em_{a}\Big(e^{-(\lambda+q) l^{-1}(a,s)}\big(1-e^{-\sum_{j=1}^{n}p_{j}l(a_{j},l^{-1}(a,s))}\big);
l^{-1}(a,s)<\tau_{b}^{+}\wedge\tau_{c}^{-}\Big)\,ds\\
=&\
\frac{W^{(q+\lambda)}(b-a)W^{(q+\lambda)}(a-c)}{W^{(q+\lambda)}(b-c)}
-\frac{\wlqp(b,a)\wlqp(a,c)}{\wlqp(b,c)}
\end{align*}
by  Theorem  \ref{cor:6}.
From the scale function identity
\[
W^{(q)}(x)-W^{(p)}(x)=(q-p)W^{(q)}*W^{(p)}(x)
\quad \text{for $p,q,x>0$,}
\]
where $*$ denotes the convolution operator,
and the fact that $W^{(q)}$ is analytic in $q$,
we have
$$W^{(q)}*W^{(q)}(x)=\frac{\partial}{\partial q}W^{(q)}(x).$$
Then
\[
 \int_{c}^{b}
\frac{W^{(q+\lambda)}(b-a)W^{(q+\lambda)}(a-c)}{W^{(q+\lambda)}(b-c)}\,da
= \frac{\frac{\partial}{\partial q}W^{(q+\lambda)}(b-c)}{W^{(\lambda+q)}(b-c)}
= \frac{\partial}{\partial \lambda} \Big(\ln W^{(\lambda+q)}(b-c)\Big).
\]
A similar relation could also be found for $\wlqp(b,c)$. Therefore, for any  $A>0$, we have
\begin{align*}
&\ \int_{0}^{A}\,d\lambda \int_{c}^{b}\Em_{a}\Big(\int_{0}^{\infty} e^{-\lambda t}
\big(1- e^{-\sum_{j=1}^{n}p_{j}l(a_{j},t)}\big)\mathbf{1}_{\{t<\tau_{b}^{+}\wedge\tau_{c}^{-}\wedge e_{q}\}}
l(a,dt)\Big)\,da\\
=&\ \int_{0}^{A}\,d\lambda
\Big(\frac{\partial}{\partial \lambda} \big(\ln W^{(\lambda+q)}(b-c)\big)
- \frac{\partial}{\partial \lambda} \big(\ln \wlqp(b-c)\big)\Big)\\
=&\ \ln\Big(\frac{\wnqp(b,c)}{W^{(q)}(b-c)}\Big)
-\ln\Big(\frac{W^{(A+q;p_{1},\cdots,p_{n})}_{(a_{1},\cdots,a_{n})}(b,c)}{W^{(A+q)}(b-c)}\Big).
\end{align*}
On the other hand, by Corollary \ref{cor:4} we have for any $a\in(c,b)$,
\[
-\log\Big(\Em_{a}\big(e^{-ql^{-1}(a,1)}; l^{-1}(a,1)<\tau_{b}^{+}\wedge\tau_{c}^{-}\big)\Big)
=\frac{W^{(q)}(b-c)}{W^{(q)}(b-a)W^{(q)}(a-c)}
\underset{q\to\infty}{\longrightarrow}\infty.
\]
It is not hard to check that
\[\D \lim_{A\to\infty}
\Big(\frac{W^{(A+q;p_{1},\cdots,p_{n})}_{(a_{1},\cdots,a_{n})}(b,c)}{W^{(A+q)}(b-c)}\Big)=1,\]
which gives \eqref{eqn:3.3.e}.

\subsection{Examples}\label{sec:4}
Considering a particular case of standard Brownian motion: $X_{t}= B_{t}$,
one can find many results on its local times in \cite{Borodin2002:book}.
For this example, $\psi(s)=\frac{1}{2}s^{2}$, $W(x)=2x$, $W^{(q)}(x)=\sqrt{\frac{2}{q}}\sinh(\sqrt{2q}x)$ and $Z^{(q)}(x)=\cosh(\sqrt{2q}x)$ for $x\geq 0$. Then
\begin{align}
W^{(q)}(x,y)=&\ W^{(q)}(x-y)= \sqrt{\frac{2}{q}}\sinh(\sqrt{2q}(x-y)^{+}),\label{eqn:exam:1}\\
\wqap(x,y)=&\ \sqrt{\frac{2}{q}}\sinh(\sqrt{2q}(x-y)^{+})\non\\
&\quad+ \frac{2p}{q}\sinh(\sqrt{2q}(x-a)^{+})\sinh(\sqrt{2q}(a-y)^{+}).\label{eqn:exam:8}
\end{align}
Given $b\geq a\geq c$,
inverting the Laplace transform in \eqref{UB:e}, we have
\begin{align*}
&\ \frac{1}{dt} \Pm_{a}(l(a,e_{q})\in\,dt, e_{q}<\tau_{b}^{+}\wedge\tau_{c}^{-})
 \exp\Big(\frac{W^{(q)}(b-c) t}{W^{(q)}(b-a)W^{(q)}(a-c)}\Big)
\non\\
=&\ \frac{W^{(q)}(a-c)(Z^{(q)}(b-c)-1)- W^{(q)}(b-c)(Z^{(q)}(a-c)-1)}{W^{(q)}(b-a)W^{(q)}(a-c)}\non\\
=&\ \sqrt{\frac{q}{2}}
\frac{\sinh(\sqrt{2q}(b-c))- \sinh(\sqrt{2q}(a-c))- \sinh(\sqrt{2q}(b-a))}
{\sinh(\sqrt{2q}(b-a))\sinh(\sqrt{2q}(a-c))}\\
=&\ \sqrt{\frac{q}{2}}
\frac{2\sinh(\sqrt{q/2}(b-a)) (\cosh(\sqrt{q/2}(b+a-2c))- \cosh(\sqrt{q/2}(b-a)))}
{\sinh(\sqrt{2q}(b-a))\sinh(\sqrt{2q}(a-c))}\\
=&\ \sqrt{\frac{q}{2}}
\frac{2\sinh(\sqrt{q/2}(a-c)) \sinh(\sqrt{q/2}(b-c))}
{\cosh(\sqrt{q/2}(b-a))\sinh(\sqrt{2q}(a-c))}\\
=& \sqrt{\frac{q}{2}} \frac{\sinh(\sqrt{q/2}(b-c))}{\cosh(\sqrt{q/2}(b-a)) \cosh(\sqrt{q/2}(a-c))}.
\end{align*}

One can check that the expression above coincides with Table 1.25.2
on P185 of \cite{Borodin2002:book}. Similarly, we can recover Tables
3.3.7 on P215 and 4.15.4 on P236 of \cite{Borodin2002:book} by making use of
\eqref{eqn:exam:1} and \eqref{eqn:exam:8}.
And with some more work, one can also verify 3.18.5, 4.16.1 and 4.18.1 of  \cite{Borodin2002:book}.

For their joint distributions, for $c\leq v\leq 0\leq u\leq b$
we have from Corollary \ref{cor:6} that
\begin{align*}
&\ -\log\Big(\Em\Big(e^{- p l(u,l^{-1}(0,1))- q l(v,l^{-1}(0,1))}; l^{-1}(0,1)<\tau_{b}^{+}\wedge\tau_{c}^{-}\Big)\Big)
=\frac{\mathsf{W}^{(p,q)}_{(u,v)}(b,c)}{\mathsf{W}^{(p,q)}_{(u,v)}(b,0)\mathsf{W}^{(p,q)}_{(u,v)}(0,c)}\\
=&\ \frac{2(b-c)+ 4p(b-u)(u-c)+ 4q (b-v)(v-c)+ 8pq(b-u)(u-v)(v-c)}
{(2b+4p(b-u)u) (-2c+4q(-v)(v-c))}\\
&\underset{c\to-\infty, b\to\infty }{\longrightarrow} \frac{4p+4q+8pq(u-v)}{4(1+2pu) (1-2qv)}= \frac{p}{1+2pv}+ \frac{q}{1-2qv},
\end{align*}
which results in the independence of $l(v,l^{-1}(0,1))$ and
$l(u,l^{-1}(0,1))$ as expected by the Ray-Knight theorem.

 For linear Brownian motion $X_{t}=\mu t+ B_{t}$, $\psi(s)=\frac{1}{2}s^{2}+\mu s$ and
 \[W(x)=\frac{1}{\mu}(1-e^{-2\mu x})=\frac{2}{\mu} e^{-\mu x}\sinh(\mu x).\]
 By Corollary \ref{cor:4} we have
\begin{align*}
\Pm_{a}\Big(l(a,\tau_{b}^{+})\geq t\Big| \tau_{b}^{+}<\tau_{c}^{-}\Big)=&\ \exp\Big(-t \frac{W(b-c)}{W(b-a)W(a-c)}\Big)\\
=&\ \exp\Big(-\frac{\mu t}{2} \frac{\sinh(\mu(b-c))}{\sinh(\mu(b-a))\sinh(\mu(a-c))}\Big),
\end{align*}
which also coincides with the result on Page 93 of
\cite{Borodin2002:book}.

\section{Proofs of the main results}\label{sec:proof}

\begin{proof}[Proof of Lemma \ref{lem:hitting}] As observed in \cite{Ivanovs2012:occupationdensity:map} that, due to the absence of positive jumps,
$$\{\tau_{a}^{-}<\tau_{b}^{+}<\infty\}=\{\tau^{\{a\}}<\tau_{b}^{+}<
\infty\} \,\,\,\,\,
\Pm_x \, \text{ a.s.}$$
Therefore, for $x\in[c,b]$
\begin{align*}
&\ \Em_{x}\big(e^{- q\tau_{b}^{+}}; \tau_{b}^{+}<\tau_{c}^{-}\big)\\
=&\ \Em_{x}\big(e^{- q\tau_{b}^{+}}; \tau_{b}^{+}<\tau_{c}^{-}, \tau_{b}^{+}<\tau_{a}^{-}\big)+
\Em_{x}\big(e^{-q \tau_{b}^{+}}; \tau_{b}^{+}<\tau_{c}^{-}, \tau_{a}^{-}<\tau_{b}^{+}\big)\\
=&\ \Em_{x}\big(e^{-q \tau_{b}^{+}}; \tau_{b}^{+}<\tau_{a}^{-}\big)+
\Em_{x}\big(e^{- q\tau_{b}^{+}}; \tau^{\{a\}}<\tau_{b}^{+}<\tau_{c}^{-}\big)\\
=&\ \Em_{x}\big(e^{-q \tau_{b}^{+}}; \tau_{b}^{+}<\tau_{a}^{-}\big)+ \Em_{x}\big(e^{- q\tau^{\{a\}}}; \tau^{\{a\}}<\tau_{b}^{+}\wedge\tau_{c}^{-}\big) \Em_{a}\big(e^{-q\tau_{b}^{+}}; \tau_{b}^{+}<\tau_{c}^{-}\big).
\end{align*}
where the Markov property is applied in the last identity.
{Solving the above equation for
{$\Em_{x}\big(e^{- q\tau^{\{a\}}}; \tau^{\{a\}}<\tau_{b}^{+}\wedge\tau_{c}^{-}\big)$}
and applying \eqref{one-sided}, the result follows.}
\end{proof}

\subsection{Weighted occupation times}\label{sec:5.1}
The proof of our main results rely on the study of the associated occupation times
and the following lemmas concerning the functions introduced.
For a locally bounded nonnegative function $\omega$ on $\mathbb{R}$,
the $\omega$-weighted occupation time up to time $t$ is defined by
\begin{equation}\label{defn:L}
L(t):=\int_{0}^{t} \omega(X_{s})\,ds\quad\text{for all $t\geq0$.}
\end{equation}
Let $\wo(\cdot,\cdot)$ and $\zo(\cdot,\cdot)$ be, respectively,
the unique locally bounded solutions to integral equations
\begin{equation}
\wo(x,y)= W(x-y)+ \int_{y}^{x}W(x-z)\omega(z)\wo(z,y)\,dz\label{defn:Ww}
\end{equation}
and
\begin{equation}
 \zo(x,y)= 1+ \int_{y}^{x}W(x-z)\omega(z)\zo(z,y)\,dz\label{defn:Zw}
\end{equation}
for $x, y\in\mathbb{R}$.
Their existence as well as uniqueness are assured by the following Lemma
which can be derived from \cite[Lemma 2.1]{BoLi:sub1} with a shifting argument, where $\wo$ and $\zo$
are called $\omega$-scale functions in the paper.
In particular, for $x<y$, $\wo(x,y)=W(x-y)=0$ and the integral in \eqref{defn:Ww} can be extended to $\mathbb{R}$.

\begin{lem}\label{lem:equation} Let $h(\cdot,\cdot)$ and $\omega(\cdot)\geq 0$ be locally bounded functions defined on $\mathbb{R}^2$ and $\mathbb{R}$, respectively.
The integral equation
\begin{equation}
H^{(\omega)}(x,y)=h(x,y)+ \int_{y}^{x} W(x-z) \omega(z) H^{(\omega)}(z,y)\,dz \label{lem:eqn:1}, \quad x, y\in\mathbb{R},
\end{equation}
admits a unique locally bounded solution $H^{(\omega)}$ on $\mathbb{R}^2$ with
$H^{(\omega)}(x,y)=h(x,y)$ for $x\leq y$.
\end{lem}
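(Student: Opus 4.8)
The plan is to prove existence and uniqueness of the solution to the integral equation \eqref{lem:eqn:1} by a standard Picard/Volterra iteration argument, exploiting the fact that the integral operator involves only the \emph{finite} kernel $W(x-z)$ integrated over the bounded region $z\in[y,x]$. First I would fix an arbitrary $y$ and a compact rectangle $[y,T]\times\{y\}$ (or more precisely work on $x$ ranging over a compact interval), and define the linear integral operator
\[
(\mathcal{K}F)(x,y):=\int_{y}^{x} W(x-z)\,\omega(z)\,F(z,y)\,dz,
\]
so that \eqref{lem:eqn:1} reads $H^{(\omega)}=h+\mathcal{K}H^{(\omega)}$. The key point is that on any compact set $W$ and $\omega$ are bounded (by hypothesis they are locally bounded), say $|W(x-z)|\le M_W$ and $|\omega(z)|\le M_\omega$ for $z\in[y,x]$ with $x$ in the compact interval, so the iterated kernel satisfies the familiar factorial bound $|\mathcal{K}^{n}F(x,y)|\le (M_WM_\omega)^n\frac{(x-y)^n}{n!}\sup|F|$.

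The main steps, in order: (i) observe that for $x\le y$ the integral vanishes (empty range of integration), forcing $H^{(\omega)}(x,y)=h(x,y)$ there, which handles the boundary condition automatically and is consistent with $W(x-z)=0$ for $x<z$; (ii) set up the Picard iteration $H_0:=h$ and $H_{n+1}:=h+\mathcal{K}H_n$; (iii) prove by induction the factorial estimate on $\mathcal{K}^{n}$ displayed above, so that the telescoping series $H_0+\sum_{n\ge0}(H_{n+1}-H_n)=\sum_{n\ge0}\mathcal{K}^{n}h$ converges absolutely and uniformly on compacts; (iv) pass to the limit to obtain a locally bounded solution $H^{(\omega)}$; and (v) establish uniqueness by applying the same factorial bound to the difference of two solutions $D:=H^{(\omega)}_1-H^{(\omega)}_2$, which satisfies $D=\mathcal{K}D$, hence $|D|\le(M_WM_\omega)^n\frac{(x-y)^n}{n!}\sup|D|\to0$ as $n\to\infty$, forcing $D\equiv0$. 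Local boundedness of the limit follows since the sum $\sum_n (M_WM_\omega)^n\frac{(x-y)^n}{n!}\sup_{[y,x]}|h|$ is dominated by $\sup_{[y,x]}|h|\cdot e^{M_WM_\omega(x-y)}$, which is finite on compacts.

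Alternatively, and perhaps more cleanly given that the paper cites \cite[Lemma 2.1]{BoLi:sub1}, I would simply invoke that lemma directly: the statement here is precisely the shifted version of the result already established there. In that case the proof reduces to a change of variables (the ``shifting argument'' mentioned in the text), replacing the two-variable function $H^{(\omega)}(x,y)$ by a one-variable family indexed by $y$, i.e. setting $\tilde H_y(x):=H^{(\omega)}(x+y,y)$ and $\tilde h_y(x):=h(x+y,y)$, and checking that $\tilde H_y$ solves the corresponding integral equation from \cite{BoLi:sub1} with weight $\tilde\omega_y(z):=\omega(z+y)$. One then transcribes existence, uniqueness, and local boundedness verbatim from the cited lemma.

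The main obstacle, as usual for such Volterra arguments, is not the convergence itself but verifying the \emph{local} boundedness of the solution uniformly in both variables in a way that legitimizes interchanging limits and integrals; the integration range $[y,x]$ depends on both endpoints, so I must be careful that the constants $M_W,M_\omega$ are chosen uniformly over the relevant compact region in the $(x,y)$-plane rather than merely for fixed $y$. Once this uniformity is pinned down, the factorial bound delivers everything, and the delicate point about $W$ potentially being unbounded near $0$ does not arise here since $W$ is continuous (hence locally bounded) under the standing assumption.
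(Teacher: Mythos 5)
Your proposal is correct, and in fact it contains the paper's own argument as your ``alternative'' route: the paper gives no self-contained proof of Lemma \ref{lem:equation} at all, but simply asserts that it ``can be derived from \cite[Lemma 2.1]{BoLi:sub1} with a shifting argument,'' which is precisely your reduction $\tilde H_y(x):=H^{(\omega)}(x+y,y)$, $\tilde\omega_y(z):=\omega(z+y)$. Your primary route --- Picard/Volterra iteration with the factorial bound $|\mathcal{K}^{n}F(x,y)|\le (M_WM_\omega)^n\frac{(x-y)^n}{n!}\sup|F|$, convergence of $\sum_{n\ge0}\mathcal{K}^{n}h$ uniformly on compacts, and uniqueness via $D=\mathcal{K}^{n}D\to0$ --- is essentially the argument underlying the cited lemma, so you are reproving it from scratch rather than importing it; what this buys is a self-contained proof that makes transparent exactly where the restriction to \emph{locally bounded} solutions enters (it guarantees $\sup|D|<\infty$ on the compact region, without which the uniqueness step collapses), and your caution about choosing $M_W,M_\omega$ uniformly over a compact region in the $(x,y)$-plane is exactly the right point to insist on. What the paper's citation buys is brevity and notational consistency with \cite{BoLi:sub1}, where the $\omega$-scale functions are built from this same series. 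Two minor points to tidy: $W$ need not be continuous on all of $\mathbb{R}$ (it jumps at $0$ whenever $W(0)>0$), but your argument only uses boundedness of $W$ on compacts, which always holds (indeed $W(x-z)\le W(x-y)$ by monotonicity); and the equation implicitly requires the integrand to be measurable in $z$, a convention on $h$ and $\omega$ that the paper also leaves tacit.
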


The following results on weighted occupation times of SNLPs can be found in \cite{BoLi:sub1}.
\begin{prop}[$\omega$-fluctuation identities]\label{prop:levy:omega} Given $b>c$, we have for $x\in[c,b]$
\[
\Em_{x}\big(e^{-L(\tau_{b}^{+})}; \tau_{b}^{+}<\tau_{c}^{-}\big)
= \frac{\wo(x,c)}{\wo(b,c)}
\]
and
\[
\Em_{x}\big(e^{-L(\tau_{c}^{-})}; \tau_{c}^{-}<\tau_{b}^{+}\big)
= \zo(x,c)-\frac{\wo(x,c)}{\wo(b,c)} \zo(b,c).
\]
The $\omega$-resolvent of $X$ killed at exiting $[c,b]$ is given by, for $x,y\in(c,b)$,
\begin{align}
\uo(x,dy):=&\ \int_0^{\infty} \Em_{x}\big( e^{-L(t)}; X_t\in dy, t<\tau_{b}^{+}\wedge\tau_{c}^{-}\big) dt\non\\
=&\ \Big(\frac{\wo(x,c)}{\wo(b,c)} \wo(b,y)- \wo(x,y)\Big) dy.
\end{align}
\end{prop}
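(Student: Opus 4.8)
The engine behind all three identities is the elementary relation $e^{-L(t)}=1-\int_{0}^{t}\omega(X_{s})e^{-L(s)}\,ds$, coming from $\frac{d}{dt}e^{-L(t)}=-\omega(X_{t})e^{-L(t)}$, combined with the strong Markov property. Each application converts a weighted quantity into a renewal-type integral equation whose unique locally bounded solution can then be identified, through Lemma \ref{lem:equation}, with the corresponding $\omega$-scale-function expression. My plan is to treat the resolvent first and read the two exit identities off it by running the same perturbation scheme on each exit quantity.

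First I would derive a renewal equation for the $\omega$-resolvent. Writing $U(x,dy)=\bigl(\tfrac{W(x-c)}{W(b-c)}W(b-y)-W(x-y)\bigr)\,dy$ for the unweighted resolvent of $X$ killed on exiting $[c,b]$ (the $q=0$ form of the displayed $U^{(q)}$, shifted to $[c,b]$), I insert the exponential relation into the definition of $\uo(x,dy)$, apply Fubini to reorder the double integral over $\{0<s<t\}$, and apply the strong Markov property at the inner time $s$. This yields
\[
\uo(x,dy)=U(x,dy)-\int_{c}^{b}\uo(x,dz)\,\omega(z)\,U(z,dy).
\]
I would then substitute the candidate density $\frac{\wo(x,c)}{\wo(b,c)}\wo(b,y)-\wo(x,y)$ into this equation. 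Using the defining equation \eqref{defn:Ww} together with its mirror form $\wo(x,y)=W(x-y)+\int_{y}^{x}\wo(x,z)\,\omega(z)\,W(z-y)\,dz$ (both representing the same convergent Neumann series, grouped from opposite ends, and hence the same function), the two sides are checked to agree after a Fubini exchange over the triangle $\{c<z<y<x\}$. Since the renewal equation has a unique locally bounded solution — its Neumann series converges because $W(0)=0$ and $\omega,W$ are locally bounded — this identifies $\uo$ and gives the resolvent formula.

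For the exit identities I would run the same perturbation directly on $f_{b}(x):=\Em_{x}(e^{-L(\tau_{b}^{+})};\tau_{b}^{+}<\tau_{c}^{-})$ and $f_{c}(x):=\Em_{x}(e^{-L(\tau_{c}^{-})};\tau_{c}^{-}<\tau_{b}^{+})$. On the event $\{\tau_{b}^{+}<\tau_{c}^{-}\}$ the exponential relation, after a Markov step at the integration time, gives
\[
f_{b}(x)=\frac{W(x-c)}{W(b-c)}-\int_{c}^{b}\uo(x,dz)\,\omega(z)\,f_{b}(z),
\]
and similarly $f_{c}(x)=\bigl(1-\tfrac{W(x-c)}{W(b-c)}\bigr)-\int_{c}^{b}\uo(x,dz)\,\omega(z)\,f_{c}(z)$, where the inhomogeneous terms are the classical $\omega\equiv0$ two-sided exit probabilities (the $q=0$ cases of \eqref{one-sided} and the displayed down-exit identity, shifted to $[c,b]$). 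With $\uo$ already in hand, I substitute the claimed answers $\wo(x,c)/\wo(b,c)$ and $\zo(x,c)-\frac{\wo(x,c)}{\wo(b,c)}\zo(b,c)$ and verify these equations using \eqref{defn:Ww}--\eqref{defn:Zw}; equivalently, each $f$ is seen directly to solve an equation of the form \eqref{lem:eqn:1}, so Lemma \ref{lem:equation} closes the argument.

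The main obstacle is the algebraic verification, especially for the resolvent: matching the two sides requires the mirror form of \eqref{defn:Ww} and a careful Fubini interchange over $\{c<z<y<x\}$, and one must track the vanishing of $\wo(\cdot,\cdot)$ and $W$ off the relevant region so that the limits of integration collapse correctly. The probabilistic steps (Fubini, the strong Markov property, finiteness of $L$ on $[0,\tau_{b}^{+}\wedge\tau_{c}^{-}]$, and a.s.\ continuous upward passage for a process with no positive jumps) are routine but must be justified. An alternative that sidesteps the verification is to first prove the identities for step functions $\omega=\sum_{i}q_{i}\mathbf{1}_{(a_{i-1},a_{i})}$, where $L$ is a finite combination of occupation times of subintervals and the identities reduce to known multi-interval occupation-time formulas such as \eqref{ratio}, and then to pass to a general locally bounded $\omega$ by bounded convergence, using the corresponding convergence of the Neumann-series solutions of \eqref{defn:Ww}--\eqref{defn:Zw}.
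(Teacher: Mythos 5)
Two pieces of context first: the paper itself contains \emph{no} proof of Proposition \ref{prop:levy:omega} (it is quoted from \cite{BoLi:sub1}), so your argument must stand entirely on its own; and its first half is essentially sound --- the renewal equation $\uo=U-\uo\,\omega\,U$ does follow from the forward relation $e^{-L(t)}=1-\int_{0}^{t}\omega(X_{s})e^{-L(s)}\,ds$ plus Tonelli and the Markov property, and the candidate $\frac{\wo(x,c)}{\wo(b,c)}\wo(b,y)-\wo(x,y)$ does satisfy it (the check goes through cleanly using Lemma \ref{W-identity}). The first genuine gap is that your displayed equations for $f_{b}$ and $f_{c}$ are not identities. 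The forward relation keeps the weight $e^{-L(s)}$ up to the splitting time, so it produces the \emph{weighted} kernel integrated against the \emph{known} unweighted exit function,
\[
f_{b}(x)=\frac{W(x-c)}{W(b-c)}-\int_{c}^{b}\uo(x,dz)\,\omega(z)\,\frac{W(z-c)}{W(b-c)},
\]
whereas the backward relation $e^{-L(t)}=1-\int_{0}^{t}\omega(X_{s})e^{-(L(t)-L(s))}\,ds$ produces the \emph{unweighted} kernel $U(x,dz)$ integrated against the unknown $f_{b}(z)$. Your equation pairs $\uo(x,dz)$ with $f_{b}(z)$, and that mixed form is false: for $\omega\equiv q$ it reduces (computing $q\int_{c}^{b}u^{(q)}(x,z)W^{(q)}(z-c)\,dz$ via the martingale $e^{-q(t\wedge\tau)}W^{(q)}(X_{t\wedge\tau}-c)$, where $u^{(q)}$ is the density of the killed $q$-resolvent) to $\Em_{x}\big((1+q\tau_{b}^{+})e^{-q\tau_{b}^{+}};\tau_{b}^{+}<\tau_{c}^{-}\big)=\Pm_{x}\big(\tau_{b}^{+}<\tau_{c}^{-}\big)$, which fails for every $q>0$ and $x<b$ because $(1+u)e^{-u}<1$ for $u>0$. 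So the planned substitution check for the exit identities cannot close as stated. The repair is easy: with the forward form above, once $\uo$ is identified, $f_{b}$ and $f_{c}$ come out by a single integration, using Lemma \ref{W-identity} and its $Z$-analogue $\zo(x,y)=1+\int_{y}^{x}\wo(x,z)\omega(z)\,dz$.

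The second, more serious gap is the uniqueness step for the resolvent equation. That equation is of Fredholm type (the $z$-integral runs over all of $[c,b]$, not a triangle), so Lemma \ref{lem:equation} does not apply to it, and your justification --- ``its Neumann series converges because $W(0)=0$ and $\omega,W$ are locally bounded'' --- is wrong: tested against $f\geq0$, the $n$-th term of that series equals $\Em_{x}\big(\int_{0}^{\tau}\frac{L(t)^{n}}{n!}f(X_{t})\,dt\big)$ with $\tau=\tau_{b}^{+}\wedge\tau_{c}^{-}$, and these sum to $\Em_{x}\big(\int_{0}^{\tau}e^{L(t)}f(X_{t})\,dt\big)$, which is infinite (indeed the terms grow geometrically, so the alternating series diverges as well) whenever $\omega$ exceeds the exponential decay rate of $\Pm_{x}(\tau>t)$; standard Brownian motion on an interval of length at least $\pi/\sqrt{2}$ with $\omega\equiv1$ already does this. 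Uniqueness of bounded solutions is in fact true, but it has to be proved, and the clean route --- essentially that of \cite{BoLi:sub1}, and what your fallback remark about \eqref{lem:eqn:1} actually requires --- is to convert Fredholm into Volterra: in the backward-form equation $\uo(x,dy)=U(x,dy)-\int_{c}^{b}U(x,dz)\omega(z)\uo(z,dy)$, split $U(x,dz)=\big(\frac{W(x-c)}{W(b-c)}W(b-z)-W(x-z)\big)dz$ and note that the nonlocal piece contributes only a constant (in $x$) multiple of $W(x-c)$, because $W(x-z)=0$ for $z>x$. For fixed $y$, the density of $\uo(\cdot,dy)$ then solves an equation exactly of the form \eqref{lem:eqn:1} with inhomogeneity $\kappa\,W(\cdot-c)-W(\cdot-y)$; Lemma \ref{lem:equation} and linearity force the solution to be $\kappa\,\wo(\cdot,c)-\wo(\cdot,y)$, and the boundary value $\uo(b,dy)=0$ (valid since $\tau_{b}^{+}=0$ $\Pm_{b}$-a.s.\ for a SNLP) pins down $\kappa=\wo(b,y)/\wo(b,c)$. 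The same rearrangement, with the boundary values $f_{b}(b)=1$ and $f_{c}(b)=0$, yields the two exit identities. With these two repairs your overall skeleton (renewal equations, candidate verification, uniqueness) does go through.
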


More properties of the $\omega$-scale functions are discovered in this paper.



\begin{lem}\label{W-identity}
For any $x, y\in\mathbb{R}$, we have
\begin{equation}\label{defn:Ww:2}
\wo(x,y)=W(x-y)+ \int_{y}^{x}\wo(x,z)\omega(z)W(z-y)\,dz.
\end{equation}
\end{lem}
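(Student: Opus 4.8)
The goal is to establish the ``dual'' integral equation \eqref{defn:Ww:2}, which differs from the defining equation \eqref{defn:Ww} in that the weighted kernel now acts on the \emph{first} argument $x$ of $\wo$ rather than on the integration variable. The plan is to show that the right-hand side of \eqref{defn:Ww:2}, viewed as a function of $(x,y)$, satisfies the \emph{same} integral equation that uniquely characterizes $\wo(x,y)$ in Lemma \ref{lem:equation}, and then invoke uniqueness. Concretely, denote by $G(x,y)$ the right-hand side of \eqref{defn:Ww:2}, namely
\begin{equation*}
G(x,y):=W(x-y)+ \int_{y}^{x}\wo(x,z)\omega(z)W(z-y)\,dz,
\end{equation*}
and I would aim to verify that $G$ coincides with $\wo(\cdot,y)$ for each fixed $y$. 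Since both agree trivially for $x\le y$ (both equal $W(x-y)=0$ for $x<y$, and the integral vanishes at $x=y$), it suffices to check that $G$ solves \eqref{defn:Ww}, i.e.\ that
\begin{equation*}
G(x,y)=W(x-y)+\int_{y}^{x}W(x-z)\omega(z)G(z,y)\,dz.
\end{equation*}

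First I would substitute the definition of $G(z,y)$ into the right-hand side of the target equation and expand, producing a single-integral term $\int_{y}^{x}W(x-z)\omega(z)W(z-y)\,dz$ and a double-integral term $\int_{y}^{x}W(x-z)\omega(z)\big(\int_{y}^{z}\wo(z,w)\omega(w)W(w-y)\,dw\big)\,dz$. In parallel, I would take the defining equation \eqref{defn:Ww} for $\wo(x,z)$ and substitute it into the integral appearing in $G(x,y)$ itself, giving $\int_{y}^{x}\big(W(x-z)+\int_{z}^{x}W(x-u)\omega(u)\wo(u,z)\,du\big)\omega(z)W(z-y)\,dz$. The strategy is then to show these two expansions match. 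The single-integral pieces are manifestly identical, so everything reduces to matching the two double integrals. This is a pure Fubini/change-of-order-of-integration computation: in one expression the inner variable is integrated over $[y,z]$ with outer variable over $[y,x]$, and in the other the inner variable runs over $[z,x]$; relabeling the dummy variables and swapping the order of integration should bring them into the same form, using that $\wo(u,z)=\wo(z,\cdot)$-type terms appear symmetrically once the regions are reconciled.

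I anticipate the main obstacle to be bookkeeping the nested integration domains correctly and ensuring the two double integrals genuinely coincide rather than merely resembling one another. The triangular regions $\{y\le w\le z\le x\}$ and $\{z\le u\le x,\ y\le z\le x\}$ must be identified under a suitable relabeling, and one must confirm that the integrand $\wo$ appears with the correct pair of arguments after swapping; a sign or argument-order slip here would break the proof. Local boundedness of $\wo$ and $\omega$ (guaranteed by Lemma \ref{lem:equation}) together with $W$ being continuous and vanishing on the negatives justifies the application of Fubini on these bounded triangular regions, so integrability is not a genuine difficulty. An alternative, cleaner route would be to pass to double Laplace transforms in both variables, where the convolution structure turns \eqref{defn:Ww} and \eqref{defn:Ww:2} into algebraic resolvent-type identities that are visibly equivalent; I would keep this in reserve in case the direct Fubini manipulation becomes unwieldy. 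Once the two expansions are shown equal, $G$ satisfies \eqref{defn:Ww} with the correct boundary behavior, so uniqueness in Lemma \ref{lem:equation} forces $G(x,y)=\wo(x,y)$, which is exactly \eqref{defn:Ww:2}.
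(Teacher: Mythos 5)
Your proposal is correct and follows essentially the same route as the paper: both define the right-hand side of \eqref{defn:Ww:2} as a new function, substitute the defining equation \eqref{defn:Ww} into it, reconcile the resulting double integrals over the triangular region $\{y\le w\le z\le x\}$ via Fubini, and then invoke the uniqueness of locally bounded solutions from Lemma \ref{lem:equation}. The only cosmetic difference is that the paper recognizes the rearranged expression directly as the equation \eqref{defn:Ww} satisfied by $g(\cdot,y)$, whereas you expand both sides and match them, which amounts to the same computation.
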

\begin{proof}[Proof of Lemma \ref{W-identity}]
Denoting by $g(x,y)$ the right hand side of \eqref{defn:Ww:2},
plugging it into \eqref{defn:Ww}, we have for $x,y\in\mathbb{R}$
\begin{align*}
g(x,y):=&\ W(x-y)\\
&\ +\int_{y}^{x}\Big(W(x-z)+\int_{z}^{x}W(x-u)\omega(u)\wo(u,z)du\Big)\omega(z)W(z-y)\,dz\\
=&\ W(x-y)+ \int_{y}^{x}W(x-z)\omega(z)W(z-y)\,dz\\
&\ + \int_{y}^{x} W(x-u)\omega(u) \Big(\int_{y}^{u}\wo(u,z)\omega(z)W(z-y)\,dz\Big)\,du\\
=&\ W(x-y)+ \int_{y}^{x}W(x-u)\omega(u)g(u,y)\,du.
\end{align*}
We thus have $g(x,y)=\wo(x,y)$ following the uniqueness of  the solution to \eqref{lem:eqn:1}.
\end{proof}

\begin{lem}\label{lem:equation:2}
Let $(\wo,\zo)$ and
$(W^{(\omega_{1})},Z^{(\omega_{1})})$ be the
scale functions for weight functions
$\omega(\cdot)\geq 0$ and $\omega_{1}(\cdot)\geq 0$, respectively. Then
for $x,y\in\mathbb{R}$
\begin{equation}
W^{(\omega_{1})}(x,y)-\wo(x,y)= \int_{y}^{x}\wo(x,z)(\omega_{1}(z)-\omega(z))W^{(\omega_{1})}(z,y)\,dz\label{lem:eqn:2}
\end{equation}
and
\begin{equation}
Z^{(\omega_{1})}(x,y)-\zo(x,y)= \int_{y}^{x}\wo(x,z)(\omega_{1}(z)-\omega(z))Z^{(\omega_{1})}(z,y)\,dz.\label{lem:eqn:3}
\end{equation}
\end{lem}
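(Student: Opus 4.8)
The plan is to show that both sides of \eqref{lem:eqn:2} satisfy one and the same integral equation of the type covered by Lemma \ref{lem:equation}, and then to invoke uniqueness. First I would subtract the defining equation \eqref{defn:Ww} for $\wo(x,y)$ from the corresponding equation for $W^{(\omega_1)}(x,y)$, obtaining for the difference $D(x,y):=W^{(\omega_1)}(x,y)-\wo(x,y)$
\[
D(x,y)=\int_y^x W(x-z)\bigl(\omega_1(z)W^{(\omega_1)}(z,y)-\omega(z)\wo(z,y)\bigr)\,dz.
\]
Rewriting the integrand via the splitting $\omega_1 W^{(\omega_1)}-\omega\wo=(\omega_1-\omega)W^{(\omega_1)}+\omega D$ turns this into
\[
D(x,y)=h(x,y)+\int_y^x W(x-z)\omega(z)D(z,y)\,dz,
\]
with inhomogeneous term $h(x,y):=\int_y^x W(x-z)(\omega_1(z)-\omega(z))W^{(\omega_1)}(z,y)\,dz$. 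Since $D(x,y)=0=h(x,y)$ for $x\le y$, this is exactly an instance of \eqref{lem:eqn:1}, so Lemma \ref{lem:equation} identifies $D$ as its unique locally bounded solution.

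Next I would check that the right-hand side of \eqref{lem:eqn:2}, namely $g(x,y):=\int_y^x \wo(x,z)(\omega_1(z)-\omega(z))W^{(\omega_1)}(z,y)\,dz$, solves the same equation. Substituting $g$ into the integral term $\int_y^x W(x-z)\omega(z)g(z,y)\,dz$, applying Fubini to exchange the order of integration over the triangle $y\le u\le z\le x$, and recognizing the resulting inner integral through \eqref{defn:Ww} written as $\int_u^x W(x-z)\omega(z)\wo(z,u)\,dz=\wo(x,u)-W(x-u)$, I find that the $W(x-u)$ contribution cancels exactly against $h(x,y)$ while the remaining $\wo(x,u)$ term reassembles into $g(x,y)$. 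Thus $g$ satisfies the same integral equation as $D$, and the uniqueness in Lemma \ref{lem:equation} forces $D=g$, which is precisely \eqref{lem:eqn:2}.

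The identity \eqref{lem:eqn:3} for the $Z$-functions follows the same route with $Z^{(\omega_1)}$ and $\zo$ in place of $W^{(\omega_1)}$ and $\wo$: subtract the defining equations \eqref{defn:Zw}, use the identical splitting $\omega_1 Z^{(\omega_1)}-\omega\zo=(\omega_1-\omega)Z^{(\omega_1)}+\omega(Z^{(\omega_1)}-\zo)$ to reach an equation of the form \eqref{lem:eqn:1}, and repeat the Fubini and \eqref{defn:Ww} cancellation verbatim. The main obstacle is this Fubini interchange together with the careful bookkeeping of the cancellation step; since the algebra parallels the proof of Lemma \ref{W-identity}, reusing that structure keeps the computation transparent.
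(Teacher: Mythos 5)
Your proof is correct, but it is organized differently from the paper's. The paper proves \eqref{lem:eqn:2} by a direct computation anchored on Lemma \ref{W-identity}: it expands $\wo(x,z)$ inside $\int_{y}^{x}\wo(x,z)\omega_{1}(z)W^{(\omega_{1})}(z,y)\,dz$ via the dual representation \eqref{defn:Ww:2}, applies Fubini, collapses the inner integrals using \eqref{defn:Ww} for $W^{(\omega_{1})}$, and recombines with \eqref{defn:Ww:2} once more, so that the identity falls out without any appeal to uniqueness in the proof itself. You never touch Lemma \ref{W-identity}; instead you subtract the defining equations, split the integrand as $(\omega_{1}-\omega)W^{(\omega_{1})}+\omega D$, and identify both the difference $D$ and the candidate $g$ as locally bounded solutions of the same Volterra equation of type \eqref{lem:eqn:1}, concluding by the uniqueness in Lemma \ref{lem:equation} (your Fubini verification that $g$ solves the equation, using $\int_{u}^{x}W(x-z)\omega(z)\wo(z,u)\,dz=\wo(x,u)-W(x-u)$, checks out, and $h$, $D$, $g$ are indeed locally bounded). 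The trade-off is roughly this: the paper front-loads the uniqueness argument into Lemma \ref{W-identity} and then gets \eqref{lem:eqn:2} by pure manipulation, whereas you invoke uniqueness directly; in exchange, your treatment of \eqref{lem:eqn:3} really is verbatim, since your inner Fubini integral involves only $\wo$ and \eqref{defn:Ww}, with $Z^{(\omega_{1})}$ riding along as a multiplicative factor. The paper's ``proved similarly'' for the $Z$ case is less innocent, as the direct route there needs the $Z$-analogue of the dual identity $\zo(x,y)=1+\int_{y}^{x}\wo(x,u)\omega(u)\,du$, which itself requires a separate uniqueness argument. In substance both proofs perform the same Fubini-plus-defining-equation cancellation; they differ only in where Lemma \ref{lem:equation} is invoked, and your arrangement is arguably the more uniform of the two.
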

\begin{proof}[Proof of Lemma \ref{lem:equation:2}]
{Here we only need to consider the nontrival case $x\geq y$.}
Applying identity \eqref{defn:Ww:2} to $\wo(x,z)$ and identity
\eqref{defn:Ww} to $W^{(\omega_{1})}(z,y)$ twice in the following integrations, we have
{for $x\geq y$}
\begin{align*}
&\ \int_{y}^{x}\wo(x,z)\omega_{1}(z)W^{(\omega_{1})}(z,y)\,dz\\
=&\ \int_{y}^{x}W(x-z)\omega_{1}(z)W^{(\omega_{1})}(z,y)\,dz\\
&\ + \iint_{x\geq u\geq z\geq y} \wo(x,u)\omega(u)W(u-z)\omega_{1}(z)W^{(\omega_{1})}(z,y)\,dz\,du\\
=&\ W^{(\omega_{1})}(x,y)-W(x-y)+ \int_{y}^{x}\wo(x,u)\omega(u)(W^{(\omega_{1})}(u,y)-W(u-y))\,du\\
=&\ W^{(\omega_{1})}(x,y)-\wo(x,y)+ \int_{y}^{x}\wo(x,u)\omega(u)W^{(\omega_{1})}(u,y)\,du.
\end{align*}
 \eqref{lem:eqn:2} is thus proved. Identity \eqref{lem:eqn:3} can be proved similarly.
\end{proof}

If $\omega(\cdot)\equiv q$, then $\wo(x,y)=W^{(q)}(x-y)$.
If $\omega_{1}(\cdot)\equiv p$ and $\omega(\cdot)\equiv q$, the identities in the above Lemma are reduced to the classical scale function identities, which were investigated and used to define auxiliary functions in \cite{Loeffen2014:occupationtime:levy} as follows:
for $b>a>0$ in their paper, $p,q\geq 0$ and $x\in\mathbb{R}$
\begin{align*}
W^{(p,q)}_{(a)}(x):=&\ W^{(q)}(x)+ (p-q) \int_{0}^{a}W^{(q)}(x-z)W^{(p)}(z)\,dz,\\
W^{(p,q,p)}_{(a,b)}(x):=&\ W^{(p,q)}_{(a)}(x)+ (p-q) \int_{b}^{x} W^{(p)}(x-z)W^{(q,p)}_{(a)}(z)\,dz.
\end{align*}
Actually, $W^{(p,q,p)}_{(a,b)}(x)=\wo(x,0)$ in our paper for
the weight function $\omega(z)=p+ (q-p) \mathbf{1}_{z\in(a,b)}$
 for $ z\in\mathbb{R}$,
and we prove that
$W^{(p,p+\frac{q}{2\varepsilon},p)}_{(a-\varepsilon,a+\varepsilon)}(x,y)
\underset{\varepsilon\to0+}{\longrightarrow} W^{(p;q)}_{(a)}(x,y)$ for all $a,x,y\in\mathbb{R}$.
More specifically, for the study of local times as defined in \eqref{defn:localtime}, for any $a\in\mathbb{R}$ and $p>0$,
we  consider the  approximate delta functions $\omega_{\varepsilon}(x):=\frac{p}{2\varepsilon}
\boldsymbol{1}_{\{|x-a|\leq\varepsilon\}}$ for $\varepsilon>0$.

Recall that we assume $W(0)=0$ throughout the paper.
\begin{lem}\label{lem:con:omega}
For $c,a\in\mathbb{R}$ with $c<a$, let $h(\cdot)\geq 0$ be a function continuous at $a$,
$\wo(\cdot,\cdot)$ be the $\omega$-scale function with respect to
a locally bounded function $\omega(\cdot)\geq 0$,
and $\hoe(\cdot)$ be a function satisfying
\begin{equation}\label{eqn:lem:con}
\hoe(x)=h(x)+ \int_{c}^{x} \wo(x,z) \omega_{\varepsilon}(z) \hoe(z)\,dz,
\quad\text{for all $x\in\mathbb{R}$,}
\end{equation}
where $\omega_{\varepsilon}(x):=\frac{p}{2\varepsilon}
\boldsymbol{1}_{\{|x-a|\leq\varepsilon\}}, \, x\in\mathbb{R}$, for some $p\geq 0$.
We have
\[
\hoe(x)\underset{\varepsilon\to0}{\longrightarrow}
\hoap(x):=h(x)+ p \wo(x,a) h(a),
\quad\text{for all $x\in\mathbb{R}$.}
\]
\end{lem}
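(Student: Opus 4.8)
The plan is to analyze the integral equation \eqref{eqn:lem:con} directly and track the behavior of the correction term $\int_{c}^{x} \wo(x,z)\omega_{\varepsilon}(z)\hoe(z)\,dz$ as $\varepsilon\to 0+$. Since $\omega_{\varepsilon}(z)=\frac{p}{2\varepsilon}\boldsymbol{1}_{\{|z-a|\leq\varepsilon\}}$ is an approximate delta function concentrated at $a$, the integral reduces to an average of $\wo(x,z)\hoe(z)$ over the shrinking interval $[a-\varepsilon,a+\varepsilon]$ (intersected with $(c,x]$), multiplied by $p$. Heuristically, as $\varepsilon\to 0+$ this average converges to $p\,\wo(x,a)\hoe(a)$, provided the factor $\wo(x,\cdot)$ is continuous near $a$ and $\hoe$ itself converges at the single point $a$. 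So the strategy splits into two parts: first determine the limit of $\hoe(a)$, and then propagate that to general $x$.

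First I would set $x=a$ in \eqref{eqn:lem:con} to obtain a self-referential equation for $\hoe(a)$. Since $\wo(a,z)$ is bounded on the relevant compact interval and $W(0)=0$ forces $\wo(a,a)=W(0)=0$, the kernel $\wo(a,z)$ vanishes at the right endpoint $z=a$; one expects $\int_{a-\varepsilon}^{a}\wo(a,z)\,\omega_\varepsilon(z)\,dz$ to be of smaller order as $\varepsilon\to 0+$ because $\wo(a,z)\to 0$ as $z\uparrow a$. This should yield $\hoe(a)\to h(a)$, using continuity of $h$ at $a$ and a bound of the form $|\hoe(z)|\le C$ uniformly in $\varepsilon$ for $z$ near $a$. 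Establishing such a uniform local bound is the technical crux: I would argue it via a Gronwall-type or fixed-point estimate, using the fact that $\wo$ is locally bounded (from Lemma \ref{lem:equation}) and that $\int \omega_\varepsilon = p$ is fixed. The vanishing of $\wo(a,a)=0$ is exactly what controls the self-interaction term and keeps the iteration from blowing up.

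Once $\hoe(a)\to h(a)$ is secured, the general case $x>a$ follows by substituting back into \eqref{eqn:lem:con}:
\[
\hoe(x)=h(x)+ \int_{a-\varepsilon}^{(a+\varepsilon)\wedge x}\frac{p}{2\varepsilon}\wo(x,z)\hoe(z)\,dz.
\]
For fixed $x>a$, once $\varepsilon$ is small enough the integration interval is fully contained in $(c,x]$, the weight $\frac{p}{2\varepsilon}$ integrates to $p$, and $\wo(x,z)\to\wo(x,a)$ by continuity of $z\mapsto\wo(x,z)$ (inherited from the integral representation of $\wo$ and continuity of the classical scale function $W$). Combined with $\hoe(z)\to h(a)$ uniformly on $[a-\varepsilon,a+\varepsilon]$, the integral converges to $p\,\wo(x,a)\,h(a)$, giving the claimed limit $\hoap(x)=h(x)+p\,\wo(x,a)\,h(a)$. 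For $x\le a$ the perturbation term is zero (the interval $[c,x]$ misses the support of $\omega_\varepsilon$ for small $\varepsilon$), and since $\wo(x,a)=0$ there as well, the formula holds trivially.

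The main obstacle I anticipate is justifying the uniform-in-$\varepsilon$ local boundedness of $\hoe$ near $a$, which underpins both the evaluation of $\hoe(a)$ and the interchange of limit and integral for general $x$. The key leverage is the regularity assumption $W(0)=0$, which makes $\wo(a,a)=0$ and thereby tames the singular self-interaction; without it the origin would be irregular and the argument would require the bounded-variation treatment the paper explicitly defers. I would make this rigorous by iterating the integral equation once or twice to expose the $\varepsilon\to 0+$ smallness of the diagonal contribution, then closing with a standard continuity-plus-domination argument.
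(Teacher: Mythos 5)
Your proposal is correct and follows essentially the same route as the paper's proof: the paper also controls the deviation $\hoe-h$ near $a$ by closing a self-consistent scalar inequality (it uses the integrated quantity $\overline{H}(\varepsilon)=\int\omega_{\varepsilon}(z)\big(\hoe(z)-h(z)\big)\,dz$ where you use a sup, but the solvability of the inequality rests on exactly your "key leverage," namely $\wo(a+\varepsilon,a-\varepsilon)\to W(0)=0$), and then substitutes this back into the equation to pass to the limit for general $x$. The only slip is your closing remark that the perturbation term vanishes for all $x\le a$: at $x=a$ it does not (the support of $\omega_{\varepsilon}$ always meets $[c,a]$), but your separate argument that $\hoe(a)\to h(a)$ covers that case, so nothing is lost.
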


\begin{proof}[Proof of Lemma \ref{lem:con:omega}]
Not that for $x\in\mathbb{R}$,
\[
\hoe(x)=
\big(h(x)+\int_{c}^{x}W(x-y)\omega(y)h(y)\,dy\big)
+ \int_{c}^{x}W(x-y)\big(\omega(y)+\omega_{\varepsilon}(y)\big)\hoe(y)\,dy.
\]
{Then by Lemma  \ref{lem:equation} $\hoe$ is 
the unique locally bounded solution to \eqref{eqn:lem:con}.}

It is not hard to see from \eqref{defn:Ww} and \eqref{defn:Ww:2} that
$\wo(x,\cdot)$ and $\wo(\cdot,y)$ are monotone for $x,y\in\mathbb{R}$, respectively,
and $\wo(\cdot,\cdot)$ is continuous on $\mathbb{R}^{2}$.

By Lemma \ref{lem:equation} we only need to focus on the nontrivial case $x>c$
and mostly the existence of $\D \lim_{\varepsilon\to0+}\hoe(x)$.
It holds that $\hoe(y)-h(y)\geq 0$ for all $y\in\mathbb{R}$, and
$\hoe(\cdot)$ is a locally bounded.
For small $\varepsilon>0$ and $x\in\mathbb{R}$ define
\[
k_{\varepsilon}(x):=\int_{c}^{x}\wo(x,z)\omega_{\varepsilon}(z)h(z)\,dz
\quad\text{and}\quad
\overline{H}(\varepsilon):=\int_{\mathbb{R}}\omega_{\varepsilon}(z)\big(\hoe(z)-h(z)\big)\,dz.
\]
By the continuity and the monotonicity of functions involved,
we have for $x>c$
\[
k_{\varepsilon}(x)\leq p \wo(x,a-\varepsilon) \sup_{\{|z-a|\leq\varepsilon\}}h(z)
\quad\text{and}\quad
k_{\varepsilon}(x)\underset{\varepsilon\to0}{\longrightarrow}  p \wo(x,a)h(a).
\]
Notice that the convergence above holds for $x\leq a$ only under the assumption $W(0)=0$.
Applying \eqref{eqn:lem:con} we further have for $x>c$
\begin{align*}
\hoe(x)-&\ h(x)= k_{\varepsilon}(x)
+ \int_{c}^{x}\wo(x,z)\omega_{\varepsilon}(z)\big(\hoe(z)-h(z)\big)\,dz\non\\
\leq&\ k_{\varepsilon}(x)+ \wo(x,a-\varepsilon) \times\overline{H}(\varepsilon)\\
\leq&\ \wo(x,a-\varepsilon)\times
\Big(p \sup_{\{|z-a|\leq \varepsilon\}}h(z)+ \overline{H}(\varepsilon)\Big).
\end{align*}
Multiplying  both sides of equation above by $\omega_{\varepsilon}(x)$
and integrating in $x$ over $\mathbb{R}$,
since $\omega_{\varepsilon}(x)=0$ for $x\notin[a-\varepsilon,a+\varepsilon]$,
it follows that
\[
\overline{H}(\varepsilon)\leq p \wo(a+\varepsilon,a-\varepsilon)\times
\Big(p \sup_{\{|z-a|\leq \varepsilon\}}h(z)+ \overline{H}(\varepsilon)\Big).
\]
{Solving the inequality above for $\overline{H}(\varepsilon)$},
   since $W(0)=0$, we have
\[
0\leq \overline{H}(\varepsilon)\leq\frac{p^{2} \wo(a+\varepsilon,a-\varepsilon)}{1-p \wo(a+\varepsilon,a-\varepsilon)}
\Big(\sup_{\{|z-a|\leq\varepsilon\}}h(z)\Big)\underset{\varepsilon\to0}{\longrightarrow} 0,
\]
which completes the proof.
\end{proof}

\begin{rmk}\label{rmk:limit:WZe}
A consequence of Lemmas \ref{lem:equation:2} and \ref{lem:con:omega} is that,
for $a,c,x,y\in\mathbb{R}$ with $a>c$,
since $\wo(x,y)$ and $\zo(x,c)$ is continuous at $x=a$,
\begin{equation}\label{limit_a}
\left\{\begin{split}
\woe(x,y)\underset{\varepsilon\to 0}{\longrightarrow}&\ \woap(x,y):=\wo(x,y)+p\wo(x,a)\wo(a,y),\\
\zoe(x,c)\underset{\varepsilon\to 0}{\longrightarrow}&\ \zoap(x,c):=\zo(x,c)+p \wo(x,a)\zo(a,c),
\end{split}\right.
\end{equation}
since we can rewrite the equation as for $x,y\in\mathbb{R}$
\[
\woe(x,y)=\wo(x,y)+ \int_{c}^{x}\wo(x,z)\omega_{\varepsilon}(z)\woe(z,y)\,dz
\]
In particular, letting $\omega(\cdot)\equiv q$  we recover the previous definitions in \eqref{eqn:1}.
\end{rmk}

\subsection{Proofs of Theorems \ref{thm:loc:q}, \ref{cor:5} and \ref{cor:6}}
Given the lemmas in Section \ref{sec:5.1}, we are ready to prove our main results.
\begin{proof}[Proof of Theorem \ref{thm:loc:q}]
The desired formulas can be derived directly from Proposition
\ref{prop:levy:omega}, Remark \ref{rmk:limit:WZe} and the
approximating identities. Put
$\omega_{\varepsilon}(x):=\frac{p}{2\varepsilon}
\boldsymbol{1}_{\{|x-a|\leq \varepsilon\}}$ for $x\in\mathbb{R}$. We have
\begin{align*}
\Em_{x}\big(e^{-q\tau_{b}^{+}-p l(a,\tau_{b}^{+})}; \tau_{b}^{+}<\tau_{c}^{-}\big)
=&\ \lim_{\varepsilon\to0+}
\Em_{x}\big(e^{-\int_{0}^{\tau_{b}^{+}}(q+\omega_{\varepsilon}(X_{s}))\,ds}; \tau_{b}^{+}<\tau_{c}^{-}\big)
\end{align*}
and
\begin{align*}
\Em_{x}\big(e^{-q \tau_{c}^{-}-p l(a,\tau_{c}^{-})}; \tau_{c}^{-}<\tau_{b}^{+}\big)
=&\ \lim_{\varepsilon\to0+}
\Em_{x}\big(e^{-\int_{0}^{\tau_{c}^{-}}(q+\omega_{\varepsilon}(X_{s}))\,ds}; \tau_{c}^{-}<\tau_{b}^{+}\big).
\end{align*}

Since $e_{q}$ is exponentially distributed variable and independent of $X$, for any nonnegative and bounded function $f$ we have
\begin{align*}
&\ \Em_{x}\big(e^{-\int_{0}^{e_{q}}\omega_{\varepsilon}(X_{s})\,ds}f(X_{e_{q}}); e_{q}<\tau_{b}^{+}\wedge\tau_{c}^{-}\big)\\
=&\ q \int_{0}^{\infty} e^{-qt} \Em_{x}\big(e^{-\int_{0}^{t}\omega_{\varepsilon}(X_{s})\,ds}f(X_{t}); t<\tau_{b}^{+}\wedge\tau_{c}^{-}\big)\,dt\\
=&\ q \int_{0}^{\infty} \Em_{x}\big(e^{-\int_{0}^{t}(q+\omega_{\varepsilon}(X_{s}))\,ds} f(X_{t}); t<\tau_{b}^{+}\wedge\tau_{c}^{-}\big)\,dt\\
=&\ q \int_{c}^{b}\Big(\frac{\wqe(x,c)}{\wqe(b,c)}\wqe(b,y)-\wqe(x,y)\Big)f(y)\,dy.
\end{align*}
The proof is completed by taking a limit.
\end{proof}

\begin{proof}[Proof of Theorem \ref{cor:5}]
Theorem \ref{cor:5} is proved similarly, but by a sequence of approximate delta functions.
Let $\{p_{k}>0, a_{k}\in\mathbb{R},k\geq1\}$ as given in \eqref{eqn:2}.
Let $$\omega_{\varepsilon_{j}}(x):=\frac{p_{j}}{2\varepsilon_{j}}
\boldsymbol{1}_{\{|x-a_{j}|\leq \varepsilon_{j}\}} \,\,\, \text{and} \,\,\,
\omega_{k}(x):=\sum_{j=1}^{k}\omega_{\varepsilon_{j}}(x)
\quad\text{for $x\in\mathbb{R}$}$$ and
$W^{(q+\omega_{k})}$ be the scale function with respect to
$q+\omega_{k}$. It follows from Proposition \ref{prop:levy:omega}
that
\[
\Em_{x}\Big(\exp\big(-q \tau_{b}^{+}- \sum_{j=1}^{k} \frac{p_{j}}{2\varepsilon_{j}} \int_{0}^{\tau_{b}^{+}} \boldsymbol{1}_{\{|X_{t}-a_{j}|\leq \varepsilon_{j}\}}\,dt \big); \tau_{b}^{+}<\tau_{c}^{-}\Big)
=\frac{W^{(q+\omega_{k})}(x,c)}{W^{(q+\omega_{k})}(b,c)}.
\]

Applying Lemmas \ref{lem:equation:2} and \ref{lem:con:omega}
successively, we have for all $x,y\in\mathbb{R}$,
\begin{align*}
W^{(q+\omega_{k})}(x,y)=&\ W^{(q+\omega_{k-1})}(x,y)+ \int_{y}^{x}
W^{(q+\omega_{k-1})}(x,z)(\omega_{k}(z)-\omega_{k-1}(z))W^{(q+\omega_{k})}(z,y)\,dz\\
=&\ W^{(q+\omega_{k-1})}(x,y)+
\int_{y}^{x} W^{(q+\omega_{k-1})}(x,z) \omega_{\varepsilon_{k}}(z) W^{(q+\omega_{k})}(z,y)\,dz\\
\underset{\varepsilon_{k}\to0}{\longrightarrow}& W^{(q+\omega_{k-1})}(x,y)+ p_{k} W^{(q+\omega_{k-1})}(x,a_{k}) W^{(q+\omega_{k-1})}(a_{k},y)\\
=&\ W^{(q+\omega_{k-1})}(x,y)+ p_{k} W^{(q)}(x- a_{k}) W^{(q+\omega_{k-1})}(a_{k},y),
\end{align*}
where the fact that $W^{(q+\omega_{k-1})}(x,a_{k})=W^{(q)}(x-a_{k})$ for all $x\in\mathbb{R}$ is needed {for the last equality.}
Applying the above result repeatedly, we have
\begin{align*}
W^{(q+\omega_{1})}(x,y)\underset{\varepsilon_{1}\to0}{\longrightarrow}&\ W^{(q)}(x-y)+ p_{1} W^{(q)}(x-a_{1}) W^{(q)}(a_{1}-y)= \mathsf{W}^{(q;p_{1})}_{(a_{1})}(x,y)
\end{align*}
and
\begin{align*}
W^{(q+\omega_{2})}(x,y)\underset{\varepsilon_{2}\to0}{\longrightarrow} &\
 W^{(q+\omega_{1})}(x,y)+ p_{2} W^{(q)}(x-a_{2}) W^{(q+\omega_{1})}(a_{2},y)\\
\underset{\varepsilon_{1}\to0}{\longrightarrow}&\ \mathsf{W}^{(q;p_{1})}_{(a_{1})}(x,y)+ p_{2} W^{(q)}(x-a_{2})
\mathsf{W}^{(q;p_{1})}_{(a_{1})}(a_{2},y)
=\mathsf{W}^{(q;p_{1},p_{2})}_{(a_{1}, a_{2})}(x,y),
\end{align*}
Then the identity \eqref{joint_a}
for $k=1$ follows by taking a limit.
The identity \eqref{joint_a} for $k\geq2$ can be shown similarly.
So does the resolvent measure.


Similarly, applying Lemmas \ref{lem:equation:2} and \ref{lem:con:omega} one has, for $v\in\mathbb{R}$ with $a_{k}>v$,
\[
Z^{(q+\omega_{k})}(x,v)
\underset{\varepsilon_{k}\to0}{\longrightarrow}
Z^{(q+\omega_{k-1})}(x,v)+ p_{k} W^{(q)}(x-a_{k}) Z^{(q+\omega_{k-1})}(a_{k},v).
\]
In particular, the approximation holds for $v\equiv c$ and for all $k$. Therefore,
\begin{align*}
Z^{(q+\omega_{1})}(x,c)\underset{\varepsilon_{1}\to0}{\longrightarrow}&\
Z^{(q)}(x-c)+ p_{1} W^{(q)}(x-a_{1}) Z^{(q)}(a_{1}-c)= \mathsf{Z}^{(q;p_{1})}_{(a_{1})}(x,c)
\end{align*}
and
\begin{align*}
Z^{(q+\omega_{2})}(x,c)\underset{\varepsilon_{2}\to0}{\longrightarrow} &\
Z^{(q+\omega_{1})}(x,y)+ p_{2} W^{(q)}(x-a_{2}) Z^{(q+\omega_{1})}(a_{2},c)\\
\underset{\varepsilon_{1}\to0}{\longrightarrow}
&\ \mathsf{Z}^{(q;p_{1})}_{(a_{1})}(x,c)+ p_{2} W^{(q)}(x-a_{2})
\mathsf{Z}^{(q;p_{1})}_{(a_{1})}(a_{2},c)
= \mathsf{Z}^{(q;p_{1},p_{2})}_{(a_{1}, a_{2})}(x,c),
\end{align*}
which eventually give \eqref{joint_b}.
This completes the proof.
\end{proof}

To derive the local times at inverse local time,
we first consider the $\omega$-weight occupation time at $l^{-1}(a,t)$,
that is, $L(l^{-1}(a,t))$ for $t>0$.
\begin{prop}[Occupation time at inverse local time]\label{thm:occu} For $b>a>c$, let $L(\cdot)$ be the $\omega$-weighted occupation time defined in \eqref{defn:L} for some $\omega(\cdot)\geq 0$. We have
\begin{equation}
\Em_{a}\big(e^{-L(l^{-1}(a,t))}; l^{-1}(a,t)<\tau_{b}^{+}\wedge\tau_{c}^{-}\big)
= \exp\Big(-\frac{\wo(b,c) t}{\wo(b,a)\wo(a,c)}\Big).
\end{equation}
\end{prop}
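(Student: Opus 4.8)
The plan is to prove Proposition~\ref{thm:occu} by exploiting the fact that the inverse local time $l^{-1}(a,\cdot)$ is a subordinator under $\Pm_a$ and reducing the computation to a single expression obtained from the $\omega$-fluctuation identities in Proposition~\ref{prop:levy:omega}. Since $\{l^{-1}(a,t)\}_{t\geq 0}$ has stationary independent increments (by the strong Markov property applied at the successive hitting times of $a$), the quantity
\[
\Em_{a}\big(e^{-L(l^{-1}(a,t))}; l^{-1}(a,t)<\tau_{b}^{+}\wedge\tau_{c}^{-}\big)
\]
should be multiplicative in $t$, i.e.\ of the form $e^{-\Theta t}$ for some constant $\Theta=\Theta(\omega,a,b,c)$. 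So the whole proof reduces to identifying the exponent $\Theta$ and showing it equals $\wo(b,c)/(\wo(b,a)\wo(a,c))$.

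First I would set up the excursion/renewal decomposition. Under $\Pm_a$, the process away from $a$ decomposes into excursions indexed by the local time $l(a,\cdot)$, and $l^{-1}(a,t)<\tau_b^+\wedge\tau_c^-$ means that none of the excursions accumulated up to local-time level $t$ has exited $(c,b)$. The weighted occupation $L(l^{-1}(a,t))$ is the sum of the $\omega$-occupations accrued during these excursions. Because the excursion process is a Poisson point process in the local time scale, the exponential formula gives $\Theta$ as an excursion-measure integral; equivalently, $\Theta$ is the rate of the ``killing'' coming either from an excursion leaving $(c,b)$ or from the weighted occupation being charged. The cleanest route to pin down $\Theta$ without invoking the excursion measure machinery directly is to start from the two first-passage identities in Proposition~\ref{prop:levy:omega} evaluated at $x=a$, namely
\[
\Em_{a}\big(e^{-L(\tau_{b}^{+})}; \tau_{b}^{+}<\tau_{c}^{-}\big)=\frac{\wo(a,c)}{\wo(b,c)},
\qquad
\Em_{a}\big(e^{-L(\tau_{c}^{-})}; \tau_{c}^{-}<\tau_{b}^{+}\big)
=\zo(a,c)-\frac{\wo(a,c)}{\wo(b,c)}\zo(b,c),
\]
together with the resolvent density, and to relate these to the value of the multiplicative functional at the \emph{first} excursion that exits $(c,b)$.

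The key identity I would establish is that the local time accumulated at $a$ up to the first exit of $(c,b)$, call it $l(a,\tau_b^+\wedge\tau_c^-)$, is exponentially distributed under the tilted measure, with rate exactly $\wo(b,c)/(\wo(b,a)\wo(a,c))$ when weighted by $e^{-L}$; this is the $\omega$-analogue of the exponential law in \eqref{cor:7b} and Corollary~\ref{cor:1}. Concretely, one shows
\[
\Em_{a}\big(e^{-L(\tau_b^+\wedge\tau_c^-)} \, e^{-s\, l(a,\tau_b^+\wedge\tau_c^-)}\big)
=\frac{\wo(b,c)/(\wo(b,a)\wo(a,c))}{s+\wo(b,c)/(\wo(b,a)\wo(a,c))},
\]
so that the Laplace exponent of the killed subordinator $t\mapsto L(l^{-1}(a,t))$ (killed on exiting $(c,b)$) is precisely $\Theta=\wo(b,c)/(\wo(b,a)\wo(a,c))$. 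Inverting via the inverse-local-time relation $\{l(a,T)>t\}=\{l^{-1}(a,t)<T\}$ then yields $e^{-\Theta t}$, which is the claim. I expect the main obstacle to be the rigorous justification that the weighted-occupation functional factorizes correctly along the excursion decomposition in the local-time scale --- that is, cleanly separating the contribution of the ``surviving'' excursions (those staying in $(c,b)$, contributing the rate through their $\omega$-occupation) from the single terminal excursion that exits, and verifying that the resulting rate assembled from the fluctuation identities at $x=a$ collapses to the compact ratio $\wo(b,c)/(\wo(b,a)\wo(a,c))$. This algebraic collapse should follow from the relations $\wo(a,c)$, $\wo(b,a)$, $\wo(b,c)$ satisfy as restrictions of the $\omega$-scale function across the level $a$, analogous to the factorization noted after Proposition~\ref{prop:1}.
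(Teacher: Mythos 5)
Your high-level strategy (multiplicativity in $t$ via the Markov structure at $l^{-1}(a,\cdot)$, plus identification of the rate $\Theta$ from the fluctuation identities evaluated at $x=a$) is in the right spirit, and the paper's own proof also passes through an exponential law and the strong Markov property at $l^{-1}(a,t)$. But there are two genuine problems. First, your key identity is false as stated: setting $s=0$ in it forces $\Em_{a}\big(e^{-L(\tau_{b}^{+}\wedge\tau_{c}^{-})}\big)=1$, which fails whenever $\omega\not\equiv 0$ on $(c,b)$; for example with $\omega\equiv q>0$ the left side is $\Em_{a}(e^{-qT})<1$ for $T=\tau_{b}^{+}\wedge\tau_{c}^{-}$, while your right side equals $1$. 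The correct statement carries the normalizing mass,
\[
\Em_{a}\big(e^{-L(T)}e^{-s\,l(a,T)}\big)=\Em_{a}\big(e^{-L(T)}\big)\cdot\frac{\Theta}{s+\Theta},
\]
i.e.\ $l(a,T)$ is exponential$(\Theta)$ only under the tilted law normalized by $\Em_{a}(e^{-L(T)})$. You conflated this with Corollary \ref{cor:1}, where the exponential law holds without tilting because the variable there is $l(a,e_{q}\wedge T)$ --- the local time stopped at an independent exponential time --- not $l(a,T)$ weighted by $e^{-qT}$; the two differ by the term $\Em_{a}\big(e^{-s\,l(a,e_{q})}; e_{q}<T\big)$.

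Second, and more seriously, you never supply a mechanism for proving the (corrected) key identity. Computing a joint transform of the $\omega$-occupation $L$ together with the local time $l(a,\cdot)$ is precisely the technical crux of the proposition, and the ``algebraic collapse'' of restriction relations of $\wo$ cannot produce it: those are deterministic identities about the functions, not statements about the joint law. The paper's mechanism is to perturb the weight, $\omega\mapsto\omega+\omega_{\varepsilon}$ with $\omega_{\varepsilon}(x)=\frac{p}{2\varepsilon}\mathbf{1}_{\{|x-a|\leq\varepsilon\}}$, apply Proposition \ref{prop:levy:omega} with this weight, and invoke Lemma \ref{lem:con:omega} and Remark \ref{rmk:limit:WZe} to obtain
\[
\Em_{a}\big(e^{-L(\tau_{b}^{+})-p\,l(a,\tau_{b}^{+})};\tau_{b}^{+}<\tau_{c}^{-}\big)
=\lim_{\varepsilon\to0+}\frac{\woe(a,c)}{\woe(b,c)}
=\frac{\wo(a,c)}{\wo(b,c)+p\,\wo(b,a)\wo(a,c)};
\]
inverting this in $p$ gives the exponential law (with the mass $\wo(a,c)/\wo(b,c)$ in front), and the Markov property at $l^{-1}(a,t)$, together with $X(l^{-1}(a,t))=a$ and $\{l(a,\tau_{b}^{+})>t\}=\{l^{-1}(a,t)<\tau_{b}^{+}\}$, then splits off the desired quantity. (Note the paper needs only the upward-exit identity, not both exits.) Since you explicitly decline the rigorous excursion-measure route, and your only cited substitutes --- Corollary \ref{cor:1} and \eqref{cor:7b} --- concern constant weights, your appeal to ``the $\omega$-analogue'' of them is essentially an appeal to the proposition being proved; the delta-approximation step (or an equivalent rigorous argument) is the missing, indispensable ingredient.
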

\begin{proof}[Proof of Proposition \ref{thm:occu}]
For $\omega_{\varepsilon}(x):=\frac{p}{2\varepsilon} \boldsymbol{1}_{\{|x-a|\leq \varepsilon\}}$ for $x\in\mathbb{R}$, we have
\begin{align*}
&\ \Em_{a}\big(e^{-L(\tau_{b}^{+})- p l(a,\tau_{b}^{+})}; \tau_{b}^{+}<\tau_{c}^{-}\big)
=\lim_{\varepsilon\to0+}
\Em_{a}\big(e^{-L(\tau_{b}^{+})-\int_{0}^{\tau_{b}^{+}}\omega_{\varepsilon}(X_{s})\,ds};
\tau_{b}^{+}<\tau_{c}^{-}\big)\\
=&\ \lim_{\varepsilon\to0+}\frac{\woe(a,c)}{\woe(b,c)}
= \frac{\wo(a,c)}{\wo(b,c)+ p \wo(b,a)\wo(a,c)}.
\end{align*}
Inverting the Laplace transform yields
\[
\Em_{a}\big(e^{-L(\tau_{b}^{+})}; l(a,\tau_{b}^{+})> t, \tau_{b}^{+}<\tau_{c}^{-}\big)
=\frac{\wo(a,c)}{\wo(b,c)}\exp\Big(-\frac{\wo(b,c) t}{\wo(b,a)\wo(a,c)}\Big).
\]

Recalling that for every $t\geq 0$,
$X(l^{-1}(a,t))=a$ on $\{l^{-1}(a,t)<\infty\}$
and
$\{l(a,\tau_{b}^{+})>t\}=\{l^{-1}(a,t)<\tau_{b}^{+}\}$,
applying the Markov property at time $l^{-1}(a,t)$, we have
\begin{align*}
&\ \Em_{a}\big(e^{-L(\tau_{b}^{+})}; l(a,\tau_{b}^{+})> t, \tau_{b}^{+}<\tau_{c}^{-}\big)\\
=&\ \Em_{a}\big(e^{-L(\tau_{b}^{+})}; l^{-1}(a,t)<\tau_{b}^{+}, \tau_{b}^{+}<\tau_{c}^{-}\big)\\
=&\ \Em_{a}\big(e^{-L(l^{-1}(t))}; l^{-1}(a,t)<\tau_{b}^{+}\wedge\tau_{c}^{-}\big) \Em_{a}\big(e^{-L(\tau_{b}^{+})}; \tau_{b}^{+}<\tau_{c}^{-}\big).
\end{align*}
This completes the proof.
\end{proof}

\begin{proof}[Proof of Theorem \ref{cor:6}] Let $\omega_{n}(x)$ be the auxiliary function defined in the proof of Theorem \ref{cor:5}. We have from Proposition \ref{thm:occu} that
\begin{align*}
&\ \Em\Big(\exp\big(-\int_{0}^{l^{-1}(a,t)}(q+\omega_{n}(X_{s}))\,ds\big); l^{-1}(a,t)<\tau_{b}^{+}\wedge\tau_{c}^{-}\Big)\\
 =&\
 \exp\Big(\frac{-W^{(q+\omega_{n})}(b,c)t}{W^{(q+\omega_{n})}(b,a) W^{(q+\omega_{n})}(a,c)}\Big)
 \underset{\varepsilon_{k}\to0}{\longrightarrow}
 \exp\Big(\frac{-\wnqp(b,c) t}{\wnqp(b,a)\wnqp(a,c)}\Big)
\end{align*}
for $k=1,\cdots,n$,
which gives the desired expression.
\end{proof}

\subsection{Matrix expressions for $\wnqp$ and $\znqp$}
We fixed $n\in\mathbb{N}$ in this subsection.
\begin{lem} \label{lem:2}
Let $\wnqp$ and $\znqp$ be  defined as in \eqref{eqn:2}.
For $x,y\in\mathbb{R}$, we have
\begin{align}\label{eqn:wn:m1}
\wnqp(x,y)
= W^{(q)}(x-y)+ \sum_{k=1}^{n} W^{(q)}(x-a_{k}) p_{k} \wnqp(a_{k},y)
\end{align}
and
\begin{align}\label{eqn:zn:m1}
\znqp(x,c)
= Z^{(q)}(x-c)+ \sum_{k=1}^{n} W^{(q)}(x-a_{k}) p_{k} \znqp(a_{k},c).
\end{align}
\end{lem}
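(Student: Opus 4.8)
The plan is to prove both identities simultaneously by induction on $n$, exploiting the inductive definition \eqref{eqn:2} together with the spectral negativity of $X$, which manifests itself through the vanishing of $W^{(q)}$ on the negative half-line. For the base case $n=1$, I would first evaluate $\mathsf{W}^{(q;p_{1})}_{(a_{1})}(a_{1},y)=W^{(q)}(a_{1}-y)+p_{1}W^{(q)}(0)W^{(q)}(a_{1}-y)$ using \eqref{eqn:2}, and observe that under the standing assumption $W^{(q)}(0)=W(0)=0$ this collapses to $W^{(q)}(a_{1}-y)$; substituting back into the definition then yields \eqref{eqn:wn:m1} for $n=1$, and the same computation with $Z^{(q)}(a_{1}-c)$ handles \eqref{eqn:zn:m1}.

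The heart of the inductive step is the observation that \emph{dropping the top level does not change the value at a lower level}. Precisely, since $\{a_{k}\}$ is strictly increasing, for $k\le n-1$ we have $a_{k}<a_{n}$ and hence $W^{(q)}(a_{k}-a_{n})=0$, so the recursion \eqref{eqn:2} evaluated at $x=a_{k}$ reduces to $\wnqp(a_{k},y)=\mathsf{W}^{(q;p_{1},\cdots,p_{n-1})}_{(a_{1},\cdots,a_{n-1})}(a_{k},y)$; evaluating instead at $x=a_{n}$ and using $W^{(q)}(0)=0$ gives $\wnqp(a_{n},y)=\mathsf{W}^{(q;p_{1},\cdots,p_{n-1})}_{(a_{1},\cdots,a_{n-1})}(a_{n},y)$. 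With these two evaluation identities in hand, I would start from the definition \eqref{eqn:2}, apply the induction hypothesis \eqref{eqn:wn:m1} to the $(n-1)$-level term $\mathsf{W}^{(q;p_{1},\cdots,p_{n-1})}_{(a_{1},\cdots,a_{n-1})}(x,y)$, and then replace each occurrence of the $(n-1)$-level function (at the points $a_{1},\dots,a_{n-1}$, and at $a_{n}$ coming from the recursion term) by the corresponding $n$-level function via those identities. The result is $W^{(q)}(x-y)+\sum_{k=1}^{n-1}W^{(q)}(x-a_{k})p_{k}\wnqp(a_{k},y)+p_{n}W^{(q)}(x-a_{n})\wnqp(a_{n},y)$, which is exactly the right-hand side of \eqref{eqn:wn:m1} with the full index set $\{1,\dots,n\}$.

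The identity \eqref{eqn:zn:m1} follows by the identical induction: the recursion for $\znqp$ in \eqref{eqn:2} shares the same multiplicative structure, with $Z^{(q)}(x-c)$ playing the role of $W^{(q)}(x-y)$, and the two evaluation identities carry over verbatim because the vanishing arguments only involve the $W^{(q)}$ prefactor, not the $Z$-term. I do not expect a genuine obstacle here; the only delicate points are the bookkeeping in reassembling the telescoping sum and the careful invocation of $W^{(q)}(0)=0$ together with the strict ordering $a_{1}<\cdots<a_{n}$, which is precisely what makes the evaluation identities valid and allows the inductive step to close cleanly. It is worth recording these two evaluation identities as an explicit intermediate claim, since they are the structural content of the lemma and the rest is routine.
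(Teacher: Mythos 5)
Your proof is correct, but it is not the paper's argument: the paper proves this lemma analytically, whereas yours is purely algebraic. The paper starts from the integral equation of Lemma \ref{lem:equation:2} for the approximating $\omega$-scale function $W^{(q+\omega_{n})}$ with $\omega_{n}(z)=\sum_{j}\frac{p_{j}}{2\varepsilon_{j}}\mathbf{1}_{\{|z-a_{j}|\leq\varepsilon_{j}\}}$, bounds the integral from above (and symmetrically from below) by point evaluations at shifted levels $a_{j}+\varepsilon_{j}$, $a_{j}+\eta_{j}$ using monotonicity of $W^{(q+\omega_{n})}(\cdot,y)$, and then passes to the iterated limits $\varepsilon_{j}\to0$, then $\eta_{j}\to0$, invoking the convergence $W^{(q+\omega_{n})}\to\wnqp$ (established in the proof of Theorem \ref{cor:5}) and continuity --- a squeeze argument. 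Your induction instead works directly from the recursion \eqref{eqn:2}: the evaluation identities $\wnqp(a_{k},y)=\mathsf{W}^{(q;p_{1},\cdots,p_{n-1})}_{(a_{1},\cdots,a_{n-1})}(a_{k},y)$ for $k\leq n$, which follow from $W^{(q)}(a_{k}-a_{n})=0$ for $a_{k}<a_{n}$ (spectral negativity: $W^{(q)}$ vanishes on the negative half-line) and from $W^{(q)}(0)=W(0)=0$ (the standing unbounded-variation assumption), let you replace every $(n-1)$-level term in the induction hypothesis by the corresponding $n$-level term, and the identity closes; the $Z$-case is verbatim because only the $W^{(q)}$ prefactors are involved. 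This is a legitimate and in fact more elementary proof: it avoids the $\omega$-scale machinery and limit interchanges entirely, and it isolates the structural facts that the paper only records after Proposition \ref{prop:1} as consequences of the triangular form of $\mathbf{I}-\bl\bs$. What the paper's route buys is coherence with its overall scheme: the lemma is derived by exactly the mechanism that produces $\wnqp$ in the first place (limits of $\omega$-scale functions), so it requires no separate algebraic bookkeeping and would extend to situations where $\wnqp$ is only known as such a limit. Two small points to tighten in your write-up: the paper assumes $\{a_{k}\}$ increasing, not necessarily strictly, but your argument survives since for $a_{k}=a_{n}$ the prefactor is $W^{(q)}(0)=0$ anyway; and you should state explicitly the standard fact that $W^{(q)}(0)=W(0)$ for all $q\geq0$ (the value at zero is independent of $q$), on which your base case and the $k=n$ evaluation identity rest.
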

\begin{proof}[Proof of Lemma \ref{lem:2}]

For arbitrarily small $\eta_{j}\geq \varepsilon_{j}>0$, $j=1,\cdots,n$,
we have by applying Lemma \ref{lem:equation:2} that for $x,y\in\mathbb{R}$,
\begin{align*}
W^{(q+\omega_{n})}(x,y)=&\ W^{(q)}(x-y)+ \int_{\mathbb{R}} W^{(q)}(x-z) \omega_{n}(z) W^{(q+\omega_{n})}(z,y)\,dz\\
\leq &\ W^{(q)}(x-y)+ \sum_{j=1}^{n} p_{j} W^{(q)}(x-a_{j}+\varepsilon_{j}) W^{(q+\omega_{n})}(a_{j}+\varepsilon_{j},y)\\
\leq &\ W^{(q)}(x-y)+ \sum_{j=1}^{n} p_{j} W^{(q)}(x-a_{j}+\eta_{j}) W^{(q+\omega_{n})}(a_{j}+\eta_{j},y)
\end{align*}
where
$\omega_{n}(x)=\sum_{j=1}^{n}\frac{p_{j}}{2\varepsilon_{j}}\boldsymbol{1}_{\{|x-a_{j}|\leq \varepsilon_{j}\}}$,
and the monotonicity of $W^{(q+\omega_{n})}(\cdot, y)$ is used.
First letting $\varepsilon_{j}\to0$ and then letting $\eta_{j}\to0$ for $j=1,\cdots,n$, we have
for $x,y\in\mathbb{R}$
\begin{align*}
&\ W^{(q)}(x-y)+ \sum_{j=1}^{n} p_{j} W^{(q)}(x-a_{j}+\eta_{j}) W^{(q+\omega_{n})}(a_{j}+\eta_{j},y)\\
\underset{\varepsilon_{j}\to0}{\longrightarrow} &\ W^{(q)}(x-y)+ \sum_{j=1}^{n} p_{j} W^{(q)}(x-a_{j}+\eta_{j}) \wnqp(a_{j}+\eta_{j},y)\\
\underset{\eta_{j}\to0}{\longrightarrow} &\ W^{(q)}(x-y)+ \sum_{j=1}^{n} p_{j} W^{(q)}(x-a_{j}) \wnqp(a_{j},y)
\end{align*}
by making use of the existence results and the continuity of the functions involved.
Similarly, one can obtain the same lower bound and those for $\znqp$, which eventually
gives the desired expression.
\end{proof}

\begin{proof}[Proof of Proposition \ref{prop:1}]
{Given the matrix representations in Lemma \ref{lem:2},
taking $x=a_{i}$ for every $i=1,\cdots,n$ in \eqref{eqn:wn:m1}, we have} for $y\in\mathbb{R}$
\[
\big(\wnqp(a_{i},y)\big)
= \bb(y)+
\bs\cdot \bl \cdot\big(\wnqp(a_{i},y)\big).
\]
Since $\bl\bs$ is a strictly lower triangular matrix,
it follows that
\[
\bl\cdot\big(\wnqp(a_{i},y)\big)
=(\mathbf{I}-\bl\bs)^{-1}\cdot\bl\cdot\bb(y).
\]
Plugging it into \eqref{eqn:wn:m1}, we have for $x,y\in\mathbb{R}$,
\[
\wnqp(x,y)=W^{(q)}(x-y)+ \ba^{\mathrm{T}}(x) (\mathbf{I}-\bl\bs)^{-1}\bl\bb(y).
\]
On the other hand, making use of a matrix multiplication
\[
 \begin{pmatrix}
W^{(q)}(x-y) & \ba^{\mathrm{T}}(x)\\
-\bl \bb(y) & \mathbf{I}-\bl\bs
\end{pmatrix}\times
\begin{pmatrix}
1 & \mathbf{0}\\
(\mathbf{I}-\bl\bs)^{-1}\bl\bb(y) &\ (\mathbf{I}-\bl\bs)^{-1}
\end{pmatrix}
\]
and the fact that $(\mathbf{I}-\bl\bs)^{-1}$ is also a lower triangular matrix with entries $1$ on the diagonal, we complete the proof of \eqref{eqn:3}. Similarly, identity \eqref{eqn:4} follows from \eqref{eqn:zn:m1}.
\end{proof}

\paragraph{Acknowledgement:}
We are grateful to an anonymous referee for numerous very helpful comments and suggestions.
Bo Li thanks Concordia University where the work on this paper was carried out during his visits.


\end{document}